\numberwithin{equation}{section}
\title{$L^p$ maximal estimates for Weyl sums with $k\ge3$ on $\T$}
\author{}
\date{}
\newcommand{\R}{\mathbb{R}}
\newcommand{\Z}{\mathbb{Z}}
\newcommand{\N}{\mathbb{N}}
\newcommand{\T}{\mathbb{T}}
\newcommand{\al}{\alpha}
\newcommand{\ndiv}{\nmid}
\newcommand{\Acal}{\mathcal{A}}
\begin{document}

\author{Xuezhi Chen}
\address{Institute of Applied Physics and Computational Mathematics, Beijing 100088, China}
\email{xuezhi-chen@foxmail.com}

\author{Changxing Miao}
\address{Institute of Applied Physics and Computational Mathematics, Beijing 100088, China}
\email{miao\_changxing@iapcm.ac.cn}

\author{Jiye Yuan}
\address{School of Science, China University of Geosciences (Beijing), Beijing 100083, China}
\email{yuan\_jiye@cugb.edu.cn}

\author{Tengfei Zhao}
\address{School of Mathematics and Physics, University of Science and Technology Beijing, Beijing 100083, China}
\email{zhao\_tengfei@ustb.edu.cn}

\maketitle

\vspace{-.5cm}

\begin{abstract}
  In this paper, 
  we study the $L^p$ maximal estimates for the Weyl sums $\sum_{n=1}^{N}e^{2\pi i(nx + n^{k}t)}$ with higher-order $k\ge3$ on $\T$, and obtain the positive and negative results. Especially for the case $k=3$, our result is sharp up to the endpoint. The main idea is to investigate the structure of the set where large values of Weyl sums are achieved by making use of the rational approximation and the refined estimate for the exponential sums.
\end{abstract}

\vspace{0.5cm}

\begin{center}
 \begin{minipage}{100mm}
   { \small {{\bf Key Words:}  Periodic Schr\"odinger equation; Weyl sums; Maximal estimate; Rational approximation; Super-level-set estimate.
   }
      {}
   }\\
    { \small {\bf AMS Classification:}
      {42B25,  42B37, 35Q41.}
      }
 \end{minipage}
 \end{center}

\newtheorem{thm}{Theorem}[section]
\newtheorem{lemma}[thm]{Lemma}
\newtheorem{cor}[thm]{Corollary}
\newtheorem{prop}[thm]{Propostion}
\newtheorem{remark}[thm]{Remark}
\newtheorem{defi}[thm]{Definition}
\newtheorem{conj}[thm]{Conjecture}
\renewcommand\theequation{\thesection.\arabic{equation}}

\newenvironment{equ}{\begin{equation}}{\end{equation}}
\newenvironment{equ*}{\begin{equation*}}{\end{equation*}}

\section{Introduction}\label{Introduction}

In recent years, the pointwise convergence problem 
 for the solutions of the free dispersive equations  has been widely studied. This problem was first posed by Carleson \cite{Carleson1980} for the free Schr\"{o}dinger equation in the Euclidean spaces in 1980, which asks for the minimal $s$ such that for any initial data $f(x)\in H^{s}(\R^d)$, it holds that the free solution $e^{-it\Delta}f(x)$ converges to
$f(x)$  almost everywhere as $t$ tends to zero. And he obtained that $s\ge\frac{1}{4}$ is a sufficient condition  for the one-dimensional case by reducing the pointwise convergence problem to the maximal estimate for the Schr\"{o}dinger operator $e^{-it\Delta}$, which becomes the general method to study this problem later. Then in 1981, Dahlberg and Kenig \cite{DK} proved that the result obtained by Carleson on $\R$ is actually sharp. For the dispersive operator $e^{it(-\Delta)^{\alpha}}$ with $\alpha>1$ on $\mathbb{R}$, the corresponding sharp results have been obtained by Sj\"olin \cite{Sjolin} in 1987. Recently, Bourgain \cite{Bourgain2016} obtained the necessary condition $s\ge\frac{d}{2(d+1)}$ for the Schr\"{o}dinger operator and the dimension $d\ge1$.  Du-Guth-Li in \cite{DGL} and Du-Zhang in \cite{DZ}  proved that the necessary condition due to Bourgain is also sufficient up to the endpoint in dimensions $d=2$ and $d\ge3$, respectively.

In the periodic case, the solution of
the  free dispersive equation on $\T$
\begin{equation}\label{Schrodinger-equation}
  \begin{cases}
    \frac{i}{2\pi}\partial_{t}u - \left(\frac{1}{2\pi i}{\partial_{x}}\right)^{k}u = 0,\quad (x,t)\in\T^{2},\\
    u(x,0) = u_0(x)\, \in H^s(\mathbb{T})
  \end{cases}
\end{equation}
is given by
\begin{equ}\label{schrodinger-sol}
    u(x,t)=\sum_{n}\widehat{u_0}(n)e^{2\pi i(n\cdot x+n^{k}t)}.
\end{equ}
It is an interesting question to  search for the optimal $s$ such that $\lim_{t\to 0}u(x,t)=u_0(x)$ almost everywhere whenever $u_0\in H^{s}(\mathbb{T})$. Similar to the Euclidean case, the strategy for studying the pointwise convergence problem in the periodic case is also to reduce it to  the  estimate of the corresponding maximal operator. In other words, we need  an estimate like
\begin{equation}\label{max-estimate}
  \Big\|\sup_{0<t<1}\big|\sum_{|n|\le N}a_{n}e^{2\pi i(n\cdot x+n^{k}t)}\big|\Big\|_{L^{p}(\T)}
  \lesssim N^{s_{p}}\Big(\sum_{|n|\le N}|a_{n}|^{2}\Big)^{\frac{1}{2}},
\end{equation}
for some $s_p$. Using Strichartz estimates, Moyua and Vega in \cite{MV}  proved that \eqref{max-estimate} holds for $k=2$, $1\le p\le 6$ and $s_{p}>\frac{1}{3}$. One can see the corresponding results for dimension $d=2$ in \cite{WZ} and for higher dimensions $d\ge3$ in \cite{CLS}.

The results for the periodic case is far from sharpness, and the pointwise convergence problem for the dispersive operator in the periodic case is still open. In the Euclidean case, $e^{-it\Delta}f(x)$ decays to zero as the time $t$ tends to $\infty$, which is the result of the local energy decaying of the solution to the dispersive equation. However in the periodic case, more difficulties in this problem arises, and the reason is that, at least formally, there are more resonances to be controlled, and there are not dispersive estimates and the local energy decaying for the dispersive operator $e^{-it\Delta}$ in the periodic case.

The pointwise convergence problem for the dispersive operator can be reduced to the corresponding maximal estimate, which is related to harmonic analysis, PDEs and number theory.   Consequently, tools from these variant fields can be employed to attack this problem.  

For the case $k=2$, Barron in \cite{Barron} and Baker  in \cite{Baker2021} used different methods to deal with the $L^p$ maximal estimate of the quadratic Weyl sums $\sum_{n=1}^{N}e^{2\pi i(nx+n^{2}t)}$ which corresponds  to \eqref{max-estimate} in the special case where $k=2$ and $a_{n}=1$ for all $1\le n\le N$.
And they obtained the sharp result $s>\frac{1}{4}$. And the corresponding sharp results of the higher dimension cases is due to the latter three authors and Barron \cite{MYZ} (with an appendix by Alex Barron). The maximal estimates for the Weyl sums
 are of interest in their own right. The corresponding results reflect the constructive interference and the structure of the set on which Weyl sums obtain large values. And its application to the Talbot effect is related to the research for the diffraction of light in physics. Please see \cite{ET} for more details about the Talbot effect.

In this paper, we devote to this problem in the higher order cases $k\ge3$ on $\T$.  Compared to the results for $k=2$ in \cite{Baker2021} and \cite{Barron},
 the maximal estimates of the Weyl sums for $k\ge3$ are also difficult to obtain, due to the influence of the different geometric surface reflected by the parameter $k$.  The following is the $L^{p}$ maximal
estimates for Weyl sums.

\begin{thm}\label{refinement-max-estimate}
If the integer $k\ge3$, then
  \begin{equ}\label{max-estimate-2}
  \bigg\|\sup_{0<t<1}\Big|\sum_{n=1}^{N}e^{2\pi i(nx + n^{k}t)}\Big|\bigg\|_{L^{p}(\T)} \lesssim N^{\frac{1}{2}+s_{p,k}+\epsilon}.
  \end{equ}
where for $k\ge3$
\begin{equation}\label{s_p,k}
  s_{p,k} =
 \begin{cases}
 \frac{1}{2}-\frac{1}{p_{k}}, \quad &\textup{if}\,\,\,1\le p\le p_{k},\\
 \frac{1}{2}-\frac{1}{p}, \quad &\textup{if}\,\,\,p>p_{k},
 \end{cases}
\end{equation}
where $p_{k} = \min\{2^{k-1},k(k-1)\}$.
\end{thm}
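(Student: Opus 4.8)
The plan is to pass to the distribution function and reduce the whole theorem to a single \emph{super-level-set estimate}. Write $S_N(x,t)=\sum_{n=1}^N e^{2\pi i(nx+n^kt)}$ and
\[
E_\lambda=\Bigl\{x\in\T:\ \sup_{0<t<1}\bigl|S_N(x,t)\bigr|>\lambda\Bigr\}.
\]
Since $S_N(x,\cdot)$ is a trigonometric polynomial of degree $\le N^k$, a standard Bernstein/Sobolev discretization first replaces $\sup_{0<t<1}$ by the maximum over the $O(N^k)$ values $t_j=jN^{-k}$ at the cost of a factor $N^\epsilon$. The layer-cake identity $\|\sup_t|S_N|\|_{L^p}^p=p\int_0^\infty\lambda^{p-1}|E_\lambda|\,d\lambda$ then shows that \eqref{max-estimate-2} follows once one establishes
\[
|E_\lambda|\ \lesssim_\epsilon\ N^{\,p_k-1+\epsilon}\,\lambda^{-p_k}\qquad\text{for}\quad \lambda\gtrsim N^{1-1/p_k},
\]
together with the trivial bound $|E_\lambda|\le 1$. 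Indeed, integrating against $\lambda^{p-1}$ and splitting at the crossover $\lambda\sim N^{1-1/p_k}$ (where the right-hand side equals $1$) reproduces \emph{exactly} the two cases of \eqref{s_p,k}: for $p>p_k$ the integral is governed by the endpoint $\lambda\sim N$, i.e.\ by the principal spike where $|S_N|\sim N$ on an $x$-set of measure $\sim N^{-1}$, and yields $N^{1-1/p}$; for $p\le p_k$ it is governed by the bulk of moderate values and yields the constant exponent $N^{1-1/p_k}$. Thus $p_k$ enters precisely as the moment order at which this estimate is sharp.

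To prove the super-level-set estimate I would exploit the fact that the supremum over $t$ is effectively a supremum over \emph{major arcs}. Fix $x\in E_\lambda$ and a near-optimal $t=t(x)$, and use Dirichlet's theorem to write $t=a/q+\tau$ with $(a,q)=1$. The decisive point is that \emph{Weyl's inequality is insensitive to the linear term $nx$}: after $k-1$ differencing steps only the leading coefficient survives, so
\[
\bigl|S_N(x,t)\bigr|\ \lesssim\ N^{1+\epsilon}\bigl(q^{-1}+N^{-1}+qN^{-k}\bigr)^{1/2^{k-1}}
\]
\emph{uniformly in $x$}. Hence $|S_N|>\lambda$ forces the denominator of the optimal $t$ to be small, $q\lesssim (N/\lambda)^{2^{k-1}}N^{\epsilon}$; in particular the optimal $t$ cannot lie on a minor arc. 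This confines $t$ but not yet $x$, so I would then freeze $t\approx a/q$ and complete the sum over residues modulo $q$: writing $n=r+mq$ and $x=b/q+\xi$ one finds
\[
S_N(x,t)\ \approx\ \frac{N}{q}\,G_k(a,b;q),\qquad
G_k(a,b;q)=\sum_{r\bmod q}e^{2\pi i(ar^{k}+br)/q},
\]
valid on the box $|\xi|\lesssim N^{-1}$, $|\tau|\lesssim N^{-k}$. This shows a large value can occur only when $x$ lies within $\sim N^{-1}$ of a rational $b/q$ with the \emph{same} small denominator, and that its height is governed by the complete exponential sum $G_k(a,b;q)$.

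It then remains to count the admissible pairs $(b,q)$, namely those with $q$ small and $\tfrac Nq|G_k(a,b;q)|\gtrsim\lambda$, each contributing an $x$-interval of length $\sim N^{-1}$. Two complementary inputs control this count and produce the two quantities inside $p_k=\min\{2^{k-1},k(k-1)\}$. The first is Weyl's inequality itself, whose savings exponent $2^{-(k-1)}$ feeds the denominator restriction and hence the value $2^{k-1}$; this is the efficient route for $3\le k\le 5$. The second is a sharp mean-value input: estimating the number of large $G_k$ through a high moment $\sum_{b\bmod q}|G_k(a,b;q)|^{2s}$, which expands to the number of solutions of the congruence system $\sum_{i=1}^{s}r_i\equiv\sum_{i=s+1}^{2s}r_i$ and $\sum_{i=1}^{s}r_i^{k}\equiv\sum_{i=s+1}^{2s}r_i^{k}\modd{q}$—a Vinogradov/decoupling-type count for the moment curve—which is sharp when the moment order is of size $\sim k(k-1)$ and wins for $k\ge6$. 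Optimizing between these two inputs delivers the threshold $p_k=\min\{2^{k-1},k(k-1)\}$ and thus the stated $s_{p,k}$, after integrating in $\lambda$ as in the first paragraph.

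The hardest part will be this last counting step. For $k\ge3$ the complete sums $G_k(a,b;q)$ lack the exact $\sqrt q$ evaluation available for quadratic Gauss sums, and their magnitude fluctuates with $\gcd(a,q)$ and with the factorization of $q$, so neither the pointwise size nor the averaged count is routine; achieving the $k(k-1)$ exponent genuinely requires the refined exponential-sum and mean-value estimates for the moment curve. One must also control the overlaps of the major arcs across the entire dyadic range of denominators $q$ without losing more than $N^\epsilon$, and reconcile this with the different major-arc geometry (width $\sim N^{-1}$ in $x$ and $\sim N^{-k}$ in $t$) dictated by the higher-order surface. Carrying out the counting so that the two exponents balance exactly at $p_k$—and, for $k=3$ where $p_3=\min\{4,6\}=4$, matching the resulting bound against a lower-bound construction to obtain sharpness up to the endpoint—is the technical core of the argument.
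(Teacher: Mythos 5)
Your overall framework---layer-cake integration against a single super-level-set estimate---is exactly the paper's (Lemma \ref{level-max}), and the level-set bound you posit, $|E_\lambda|\lesssim N^{p_k-1+\epsilon}\lambda^{-p_k}$ for $\lambda\gtrsim N^{1-1/p_k}$, is in fact true and suffices: it is weaker than the paper's Lemma \ref{refinement-level-set} (which gives $N^{k+\epsilon}\lambda^{-(k+1)}$; note $p_k\ge k+1$ for all $k\ge3$), and your integration recovering the two cases of \eqref{s_p,k} from it is correct. The genuine gaps are both inside your proof of that level-set bound.

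First, your covering of $E_\lambda$ is too small. It is not true that a large value forces $x$ to lie within $\sim N^{-1}$ of a rational $b/q$: the Vaughan-type approximation gives $S_N\approx q^{-1}G_k(a,b;q)\,I(\xi,\tau)$ with $I$ only \emph{decaying} like $N(N|\xi|)^{-1/k}$ off the box, so $|S_N|>\lambda$ is compatible with $|x-b/q|\lesssim q^{-1}(N/\lambda)^k N^{-1+\epsilon}$ (this is precisely Proposition \ref{stru-3} / Proposition \ref{refinement-structure}). At the threshold $\lambda\sim N^{1-1/p_k}$ this width is about $q^{-1}N^{-1+k/p_k}$, e.g.\ as large as $N^{-1/4}$ when $k=3$---far bigger than $N^{-1}$. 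With the correct widths, the naive count of pairs $(b,q)$ overshoots your claimed bound by a factor $(N/\lambda)^k$, so the loss must be recovered by trading interval width against the size of $G_k(a,b;q)$ \emph{and} against the arithmetic of $q$. This is where the paper works hardest: for power-full moduli (e.g.\ $q$ a perfect $k$-th power) one has $|G_k|\sim q^{1-1/k}\gg\sqrt q$, and these moduli are the dominant ones; the paper controls them via the factorized bound of Lemma \ref{refinement-Gauss-sum} combined with the sparsity of power-full numbers (Lemma \ref{F-number}). Your proposal, whose heuristics (Weil-size sums, moments over $b$) are really prime-$q$ computations, never addresses this regime, even though you correctly flag it as the hard part.

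Second, your source for the exponent $k(k-1)$ is misidentified, and this breaks the cases $k\ge6$. In the paper, $k(k-1)$ enters through the minor-arc bound for the \emph{incomplete} Weyl sum---Vinogradov's mean value theorem in the Bourgain--Demeter--Guth/Wooley form, built into the validity threshold $A>N^{1-1/p_k+\epsilon}$ of Proposition \ref{stru-3}---not through moments of \emph{complete} sums mod $q$. Your moment $\sum_{b}|G_k(a,b;q)|^{2s}$ counts solutions of a two-equation congruence system and has no mechanism producing $k(k-1)$; more importantly, no complete-sum input can lower the validity threshold, because below it the point $t$ may lie on a minor arc where there is no approximation by complete sums at all. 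Using only classical Weyl differencing, your level-set estimate is established only for $\lambda\gtrsim N^{1-1/2^{k-1}}$, so the crossover in your layer-cake integral sits at $N^{1-1/2^{k-1}}$ and yields $s_{p,k}=\tfrac12-\tfrac1{2^{k-1}}$ for small $p$, which is strictly weaker than \eqref{s_p,k} when $k\ge6$ since then $k(k-1)<2^{k-1}$. To repair this you must import the Vinogradov-based refinement of Weyl's inequality into both the denominator restriction and the range of validity, as Baker's structure theorem (and hence the paper) does.
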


\begin{remark}
(1) For the cases $k\ge3$, we establish 
   the maximal estimate \eqref{max-estimate-2} by investigating the structure of the set on which Weyl sums obtain large values and then getting the super-level-set  estimates, which play a central role in obtaining Theorem \ref{refinement-max-estimate}.

(2) Besides the results and methods for the case $k=2$ in \cite{Barron}, in order to obtain the $L^p$ estimate for Weyl sums for the cases $k\ge3$, we also utilize tools associated with 
  the power-free and power-full properties of the integers, which can largely improve the results and is the key ingredient to get the sharp result for the case $k=3$.
\end{remark}

\begin{thm}[Negative result]\label{negative-theorem}
For any integer $k\ge3$, there holds for $N$ large enough
\begin{equation}\label{lower-max-estimate}
  \bigg\|\sup_{0<t<1}\Big|\sum_{n=1}^{N}e^{2\pi i(nx + n^{k}t)}\Big|\bigg\|_{L^{p}(\T)} \gtrsim N^{\frac{1}{2}+\gamma_{p}}
\end{equation}
where
\begin{equation}
  \gamma_{p} =
  \begin{cases}
    \frac{1}{4},\quad&\textup{if}\,\,1\le p\le 4,\\
    \frac{1}{2}-\frac{1}{p}, \quad&\textup{if}\,\,p>4.
  \end{cases}
\end{equation} 
\end{thm}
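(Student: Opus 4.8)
The plan is to reduce both ranges of $p$ to a single scale-indexed family of lower-bound constructions and to read off the two cases of $\gamma_{p}$ by optimizing in the scale. Writing $S_N(x,t)=\sum_{n=1}^{N}e^{2\pi i(nx+n^{k}t)}$, I aim to produce, for each dyadic $Q\in[1,N^{1/2}]$, a set $E_Q\subset\T$ with $|E_Q|\gtrsim Q^{2}/N$ on which $\sup_{0<t<1}|S_N(x,t)|\gtrsim N Q^{-1/2}$. Granting this, the layer-cake formula yields the super-level-set bound $|\{x:\sup_t|S_N|\gtrsim\lambda\}|\gtrsim N^{3}\lambda^{-4}$ for $N^{3/4}\lesssim\lambda\lesssim N$, and computing $\int p\lambda^{p-1}|\{\cdots\}|\,d\lambda$ shows the contribution is governed by $\lambda\sim N^{3/4}$ when $1\le p\le4$ (giving $N^{3/4}=N^{1/2+1/4}$) and by $\lambda\sim N$ when $p>4$ (giving $N^{1-1/p}=N^{1/2+(1/2-1/p)}$), which is exactly \eqref{lower-max-estimate}.

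The endpoint $Q\sim1$, which controls $p>4$, is elementary: taking $t=cN^{-k}$ with $c$ small forces $|n^{k}t|\lesssim1$ for all $n\le N$, so $S_N(\cdot,t)$ is comparable to a Dirichlet kernel and $|S_N(x,t)|\gtrsim N$ on $\{|x|\lesssim N^{-1}\}$, a set of measure $\sim N^{-1}$. The heart of the matter is the opposite endpoint $Q\sim N^{1/2}$, governing $1\le p\le4$. Here I would localize $t$ at rationals $a/q$ with $q\sim N^{1/2}$. Splitting $n=mq+r$ with $0\le r<q$ and using $(mq+r)^{k}\equiv r^{k}\modd{q}$, for $x=j/q+\xi$ with $|\xi|\lesssim N^{-1}$ each inner geometric sum is coherent of size $\sim N/q$, giving the factorization
\[
S_N(x,a/q)=\sum_{r=0}^{q-1}e^{2\pi i a r^{k}/q}\sum_{m}e^{2\pi i(mq+r)x},\qquad |S_N(x,a/q)|\gtrsim \tfrac{N}{q}\,|G(a,j;q)|,
\]
where $G(a,j;q)=\sum_{r\,(q)}e^{2\pi i(ar^{k}+jr)/q}$ is a complete exponential sum. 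Asking for $|G(a,j;q)|\gtrsim\sqrt q$ then produces the target value $N/\sqrt q\sim N^{3/4}$.

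The main obstacle, and the only genuinely arithmetic step, is to show this largeness occurs for a positive proportion of the relevant $x$, i.e.\ for $\gtrsim q$ residues $j$ rather than only near $x=0$ (which would yield a set far too small, of measure $\sim N^{-1/2}$, and hence only $N^{3/4-1/(2p)}$ in $L^{p}$). I would establish it by a second-moment argument: Parseval in $a$ gives $\frac{1}{q^{2}}\sum_{a,j}|G(a,j;q)|^{2}\sim q$, while Weil's bound gives $|G(a,j;q)|\lesssim_{k}\sqrt q$ for $q$ prime and $a\not\equiv0$; comparing the average with this pointwise ceiling forces $|G(a,j;q)|\gtrsim_{k}\sqrt q$ for a $k$-dependent positive proportion of pairs $(a,j)$, hence for $\gtrsim_{k}q$ distinct $j$. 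Since the dependence on $k$ enters only through the constant in Weil's inequality, this explains why $\gamma_{p}$ is independent of $k$.

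Finally I would assemble $E_{N^{1/2}}$ from the $x$-intervals of length $\sim N^{-1}$ about the good Farey fractions $j/q$ with $q\in[N^{1/2},2N^{1/2}]$; distinct such fractions are $\gtrsim N^{-1}$-separated, so these intervals are essentially disjoint and the total measure is bounded below. The two delicate points I expect to fight with are (i) controlling the geometric-sum errors $e^{2\pi i r\xi}-1$ uniformly in $r$ so that the main term $\tfrac{N}{q}G(a,j;q)$ genuinely dominates, and (ii) upgrading from prime moduli (which alone give measure $\gtrsim(\log N)^{-1}$) to all $q\sim N^{1/2}$ via CRT and prime-power analogues of Weil's bound, in order to remove the spurious $\log N$ and reach $|E_{N^{1/2}}|\gtrsim1$, and thus $\|\sup_t|S_N|\|_{L^{1}(\T)}\gtrsim N^{3/4}$, from which the case $1\le p\le4$ follows by monotonicity of $L^{p}(\T)$ norms.
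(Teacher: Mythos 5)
Your proposal follows essentially the same route as the paper. For $p>4$ you use the same trivial constructive-interference bound; for $1\le p\le 4$ your construction --- evaluate at $t=a/q$ with $q\sim\sqrt N$ prime, show $|G(a,j;q)|\gtrsim\sqrt q$ for a positive proportion of pairs $(a,j)$, pigeonhole in $a$ to fix one good numerator, and integrate over the disjoint $N^{-1}$-neighborhoods of the good fractions $j/q$ using $\|\cdot\|_{L^p(\T)}\ge\|\cdot\|_{L^1(\T)}$ --- is precisely the paper's proof. The two presentational differences are immaterial: your Parseval-plus-Weil second-moment argument is exactly the internal proof of the An--Chu--Pierce proposition that the paper quotes as a black box (Proposition \ref{prop_sum_big} together with Corollary \ref{cor_T_big}), and your direct factorization of $S_N(x,a/q)$ replaces the paper's appeal to Vaughan's approximation lemma (Lemma \ref{decom-S-d}). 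Your ``delicate point (i)'' does work out by the standard device: pull out $e^{2\pi i r\xi}$ and a common Dirichlet factor $D(\xi)=\sum_{0\le m\le N/q}e^{2\pi i mq\xi}$; the errors are $O(q)$ (from the $r$-dependence of the range of $m$) plus $O\bigl(|D(\xi)|\cdot q^{2}|\xi|\bigr)=O(cN/q)$ (from $e^{2\pi ir\xi}-1$), both much smaller than $N/\sqrt q$.

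The one step that would fail as described is your point (ii), the removal of the logarithm via composite moduli. For composite $q$ the sum $G(a,j;q)$ factors by CRT, so $|G(a,j;q)|\gtrsim\sqrt q$ forces near-extremal size at \emph{every} prime factor; the proportion of good pairs a second-moment argument can then guarantee is at best $\alpha^{\omega(q)}$ for some $\alpha=\alpha(k)<1$, and since a typical $q\sim\sqrt N$ has $\omega(q)\sim\log\log N$ prime factors this is a negative power of $\log N$ --- worse, not better, than restricting to primes. (A proportion uniform in $q$ would require something new, e.g.\ a fourth-moment/Paley--Zygmund argument together with the count $\#\{r_1+r_2\equiv r_3+r_4,\ r_1^{k}+r_2^{k}\equiv r_3^{k}+r_4^{k} \ (\mathrm{mod}\ q)\}\lesssim_k q^{2}$ for all moduli, including prime powers.) You should know, however, that the paper does not resolve this issue either: its proof sums over primes $q\in[c_1\sqrt N,\sqrt N]$ and invokes the prime number theorem, so what its final display actually establishes is $\gtrsim N^{3/4}/\log N$ --- exactly what your prime-moduli fallback gives --- and the concluding line ``$\gtrsim N^{3/4}$'' silently absorbs the $\log$. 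So your proposal proves just as much as the paper's own argument; neither, as written, reaches the stated logarithm-free lower bound for $1\le p\le 4$.
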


\begin{remark}
(1) For the cases $k\ge3$, we construct a nontrivial counterexample which gives the lower bound of the maximal estimates \eqref{lower-max-estimate}, which coincides with that in \cite{Baker2021} and \cite{Barron} for the case $k=2$.
     We need the estimates of the general Gauss sums  $\sum_{n=1}^{q}e^{2\pi i(\frac{a}{q}n^{k}+\frac{b}{q}n)}$ for higher-order $k\ge3$ and count the number of the intervals for the set where Weyl sums obtain the lower value $\sqrt{q}$ using pigeonholing and the prime number theorem.

(2) By Theorem \ref{refinement-max-estimate} and Theorem \ref{negative-theorem}, we can see that the result of the maximal estimate for the case  $k=3$ is sharp up to the endpoint. However for the higher-order cases $k\ge4$, this problem is left to be studied further to see that if more powerful tools than the Weyl's differencing method and the decoupling method can be applied to the maximal estimates of Weyl sums.

(3) By Theorem \ref{negative-theorem}, we see that the lower bound obtained now is independent of the parameter $k$. It is unknown whether the lower bound is sharp or not.
\end{remark}

\begin{remark}
    Note that  the possible values of $r_k$ in \eqref{s_p,k}, namely $2^{k-1}$ and $k(k-1)$,  are closely related to the pointwise estimate of the general Weyl sums. The value $2^{k-1}$ is obtained by Weyl's differencing method, while the value $k(k-1)$ is obtained by Vinogradov's mean value theorem established by Bourgain, Demeter and Guth \cite{BDG} and Wooley \cite{Wooley2017}. It is conjectured that for any fixed integer $k\geq 2$ and positive number $\epsilon>0$, if $P(x)=\sum_{j=1}^k\alpha_j x^j$ with $|\alpha_k-a/q|\leq 1/q^2$ and $(a,q)=1$, then
    \begin{equ}
        \sum_{n=1}^N e\left(P(n)\right)\lesssim_{k,\epsilon} N^{1+\epsilon}\left(\frac{1}{q}+\frac{q}{N^k}\right)^{1/k}.
    \end{equ}
   As illustrated in Appendix \ref{appendix-a}, if we assume the conjecture is valid, we could take $r_k=k+1$ in \eqref{s_p,k}, which is an potential improvement of Theorem \ref{refinement-max-estimate}  but still far from the lower bound given in Theorem \ref{negative-theorem} when $k\geq 4$.
\end{remark}

%

This paper is organized as follows. In Section \ref{Preliminary}, we give some preliminary tools and
lemmas. In Section \ref{positive-refinement}, we give the proof of Theorem \ref{refinement-max-estimate}.
In Section \ref{negative-result}, we construct
the counterexample of the maximal estimates \eqref{max-estimate-2}, which gives the proof of
Theorem \ref{negative-theorem}. And in Appendix \ref{appendix-a}, we sketch a potential improvement of Theorem \ref{refinement-max-estimate} assuming the Weyl sums estimate.

\section{Preliminaries}\label{Preliminary}

In this section, we present a series of lemmas that are instrumental in characterising the structure of the set of points for which the Weyl sum assumes a significant value.


For $\bm{u}=(u_1,u_2,\ldots,u_k)$, define
\begin{equation}\label{S-k}
  S_{k}(\bm{u};N) := \sum_{n=1}^{N}e(u_{1}n + u_{2}n^{2} + \cdots + u_{k}n^{k}),
\end{equation}
where $e(x):=e^{2\pi ix}$.
The following lemma gives an approximation of the Weyl sums.

\begin{lemma}[Vaughan \cite{Vaughan}, Theorem 7.2
]\label{decom-S-d}
  Suppose that for integers $q$, $r_{1}$, $r_{2}$, $\ldots$, $r_{k}$ satisfying
  \begin{equ*}
    \gcd(q,r_{1},r_{2},\cdots,r_{k}) = 1,
  \end{equ*}
  we have
  \begin{equation}
    |u_{j}-\tfrac{r_{j}}{q}|\le \xi_{j}, \quad j=1,2,\cdots,k.
  \end{equation}
  for some $\xi_{1}, \xi_{2}, \cdots \xi_{k}\in\R$. Then
  \begin{equation}\label{decomposition-S-d}
    S_{k}(\bm{u},N) = q^{-1}S_{k}(q^{-1}\bm{r};q)I(\bm{\xi}) + \Delta.
  \end{equation}
  where $\bm{r} = (r_{1},r_{2},\cdots,r_{k})$, $\bm{\xi}=(\xi_{1},\xi_{2},\cdots,\xi_{k})$,
  \begin{equ}
    I(\bm{\xi}) = \int_{0}^{N}e(\xi_{1}z+\xi_{2}z^{2}+\cdots+\xi_{k}z^{k})\,\mathrm{d}z
  \end{equ}
  and $\Delta$ satisfies the bound
  \begin{equation}\label{delta-bound}
    \Delta \lesssim q(1+|\xi_{1}|N+|\xi_{2}|N^{2}+\cdots+|\xi_{k}|N^{k}).
  \end{equation}
\end{lemma}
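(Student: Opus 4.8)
The plan is to separate the rational main part of the phase from the small perturbation $\xi_j := u_j - r_j/q$, and then reduce the exponential sum to the integral $I(\bm{\xi})$ by sorting the summation variable into residue classes modulo $q$. Writing $u_j n^j = (r_j/q)n^j + \xi_j n^j$, the summand factors as
\begin{equation*}
  e(u_1 n+\cdots+u_k n^k)=e\Big(\tfrac{r_1 n+\cdots+r_k n^k}{q}\Big)\,g(n),\qquad g(z):=e(\xi_1 z+\cdots+\xi_k z^k).
\end{equation*}
The first factor is $q$-periodic in $n$: if $n\equiv s\pmod q$ then $n^j\equiv s^j\pmod q$ for each $j$, so the numerator $r_1 n+\cdots+r_k n^k$ is congruent to $r_1 s+\cdots+r_k s^k$ modulo $q$. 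Hence grouping $S_k(\bm{u};N)$ by the residue $s\in\{1,\dots,q\}$ pulls out the factor $e((r_1 s+\cdots+r_k s^k)/q)$ in front of the inner sum $\sum_{1\le n\le N,\,n\equiv s\,(q)}g(n)$.

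The key step is a sum-to-integral comparison on each progression. Since $g$ is $C^1$ with $|g|\equiv 1$, the Euler--Maclaurin formula gives
\begin{equation*}
  \sum_{\substack{1\le n\le N\\ n\equiv s\,(q)}}g(n)=\frac1q\int_0^N g(z)\,\mathrm{d}z+O\Big(1+\int_0^N|g'(z)|\,\mathrm{d}z\Big),
\end{equation*}
where the error collects the two endpoint contributions and the total variation of $g$ on $[0,N]$. Because $g'(z)=2\pi i(\xi_1+2\xi_2 z+\cdots+k\xi_k z^{k-1})g(z)$, a term-by-term integration yields $\int_0^N|g'(z)|\,\mathrm{d}z\le 2\pi(|\xi_1|N+|\xi_2|N^2+\cdots+|\xi_k|N^k)$, which is exactly the quantity appearing in \eqref{delta-bound}.

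Summing the comparison over the $q$ residue classes assembles the claim. The main terms combine to
\begin{equation*}
  \Big(\sum_{s=1}^q e\big(\tfrac{r_1 s+\cdots+r_k s^k}{q}\big)\Big)\frac1q\int_0^N g(z)\,\mathrm{d}z=q^{-1}S_k(q^{-1}\bm{r};q)\,I(\bm{\xi}),
\end{equation*}
matching the main term of \eqref{decomposition-S-d}. Since each periodic factor has modulus one, the $q$ error terms add up to at most $q\cdot O(1+\int_0^N|g'|)\lesssim q(1+|\xi_1|N+\cdots+|\xi_k|N^k)$, giving the stated bound on $\Delta$. I expect the only delicate point to be the sum-to-integral comparison: one must keep the $O(1)$ endpoint terms uniform in $s$ and confirm that the total-variation error reproduces the weighted powers $|\xi_j|N^j$ rather than something coarser. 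An equivalent route avoiding the residue decomposition is to apply Abel summation to $S_k(\bm{u};N)=\sum_n a(n)g(n)$ with $a(n):=e((r_1 n+\cdots+r_k n^k)/q)$, using the periodic-sum estimate $A(x)=\sum_{n\le x}a(n)=\tfrac{x}{q}S_k(q^{-1}\bm{r};q)+O(q)$; partial summation against $g$ then produces the same main term $q^{-1}S_k(q^{-1}\bm{r};q)I(\bm{\xi})$ and the same error governed by $\int_0^N|g'|$.
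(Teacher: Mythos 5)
Your proof is correct and follows essentially the same route as the source the paper relies on: the paper gives no proof of this lemma (it is quoted from Vaughan, Theorem 7.2), and Vaughan's argument is precisely your decomposition into residue classes modulo $q$, the sum-to-integral comparison on each progression with error $O\bigl(1+\int_0^N|g'|\bigr)$, and the bound $\int_0^N|g'|\lesssim |\xi_1|N+\cdots+|\xi_k|N^k$. One small point worth noting: the paper's transcription of the hypothesis reads $|u_j-r_j/q|\le\xi_j$, but the decomposition only makes sense with $\xi_j=u_j-r_j/q$ exactly, which is the interpretation you (correctly) adopted and the one the paper itself uses when applying the lemma in Proposition 2.7.
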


\begin{remark}
  With the same notations as in Lemma \ref{decom-S-d}, \cite[Theorem 7.3]{Vaughan} tells us that the auxiliary
  function $I(\xi)$ satisfies
  \begin{equation}
    I(\xi) \lesssim N(1+\xi_{1}N+\cdots+\xi_{k}N^{k})^{-\frac{1}{k}},
  \end{equation}
  which yields that
  \begin{equation}\label{I-bound}
  I(\xi)\lesssim N \min_{j=1,\cdots,k}\{1,\xi_{j}^{-\frac{1}{k}}N^{-\frac{j}{k}}\}.
  \end{equation}
\end{remark}

Let
\begin{equation}\label{weyl-sums-k}
\omega_{N,k}(x,t) = \sum_{n=1}^{N}e^{2\pi i(nx + n^{k}t)}
\end{equation}
for $x\in\T$. It is easy to see that
\begin{equation*}
\omega_{N,k}(x,t) = S_{k}(\bm{u};N) 
\end{equation*}
with $u_{1} =x$, $u_{j} = 0$ for $2\le j\le k-1$ and $u_{k} = t$.


To describe the structure of the set where large values of Weyl sums are achieved, we combine Theorem 4 in \cite{Baker2016} with Theorem 3 in \cite{Baker1982} to obtain the following result. For the strategy of the proof, one can see Appendix \ref{appendix-a} and the reference therein. 

\begin{prop}\label{stru-3}
Let $k\ge3$ and some $\epsilon>0$. Suppose that
$$A>N^{1-\frac{1}{L}+\epsilon},$$
where
\begin{equation}
  L=\min\{2^{k-1},k(k-1)\},
\end{equation}
we have $|\omega_{N,k}(x,t)|\ge A$. Then there exist integers $q,r_1,r_k$ such that
\begin{equation}
  1\le q\le (NA^{-1})^{k}N^{\epsilon}, \quad \gcd(q,r_{1},r_{k}) = 1,
\end{equation}
and
\begin{align}
  &|x-\tfrac{r_1}{q}|\le q^{-1}(NA^{-1})^{k}N^{-1+\epsilon},\\
  &|t-\tfrac{r_k}{q}|\le q^{-1}(NA^{-1})^{k}N^{-k+\epsilon}.
\end{align}

\end{prop}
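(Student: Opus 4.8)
The statement is a ``large values imply major-arc structure'' result, and the plan is to prove it at two scales: a coarse localisation of the leading coefficient $t$ coming from the sub-critical Weyl/Vinogradov bound, followed by a refinement through Vaughan's approximation (Lemma~\ref{decom-S-d}) that both improves the denominator to the exponent $k$ and extracts the simultaneous approximation of $x$ and $t$. Throughout I use that $\omega_{N,k}(x,t)=S_k(\bm{u};N)$ with $u_1=x$, $u_j=0$ for $2\le j\le k-1$, and $u_k=t$.

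First I would localise $t$ to a major arc. Dirichlet's theorem applied to $t$ produces coprime $a,q$ with $1\le q\le N^{k-1}$ and $|t-a/q|\le (qN^{k-1})^{-1}$, and then Weyl's inequality (giving the exponent $1/2^{k-1}$) together with the Vinogradov mean value bound of \cite{BDG,Wooley2017} (giving the exponent $1/(k(k-1))$) yield
\begin{equation*}
  |\omega_{N,k}(x,t)| \lesssim N^{1+\epsilon}\big(q^{-1}+N^{-1}+qN^{-k}\big)^{1/L}.
\end{equation*}
Since $|\omega_{N,k}(x,t)|\ge A>N^{1-\frac1L+\epsilon}$, the middle term $N^{-1}$ is too small to account for the lower bound, and with this Dirichlet range the term $qN^{-k}$ is likewise controlled; hence $q^{-1}$ must dominate, forcing the coarse bound $q\lesssim (NA^{-1})^{L}N^{\epsilon}$ and placing $(x,t)$ on a major arc. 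This is exactly where the hypothesis $A>N^{1-\frac1L+\epsilon}$ enters: below this threshold the minor-arc estimate would permit large values far from rationals.

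Next I would refine on this major arc via Lemma~\ref{decom-S-d} with $\bm{r}=(r_1,0,\dots,0,r_k)$, $r_k=a$, $r_1$ the nearest integer to $qx$, and $\xi_j=u_j-r_j/q$. Inserting the complete (Gauss) sum bound $|S_k(q^{-1}\bm{r};q)|\lesssim q^{1-\frac1k+\epsilon}$ and the oscillatory-integral bound \eqref{I-bound} into \eqref{decomposition-S-d} gives
\begin{equation*}
  |\omega_{N,k}(x,t)|\lesssim q^{-\frac1k+\epsilon}\,N\min_{j}\{1,|\xi_j|^{-1/k}N^{-j/k}\}+|\Delta|,
\end{equation*}
with $|\Delta|$ bounded by \eqref{delta-bound}. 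Once $\Delta$ is shown to be negligible compared with $A$, the main term must be $\gtrsim A$; comparing the prefactor $q^{-\frac1k+\epsilon}N$ with $A$ yields the sharp bound $q\lesssim (NA^{-1})^{k}N^{\epsilon}$, while reading off the factors of the minimum for $j=1$ and $j=k$ forces $|\xi_1|\le q^{-1}(NA^{-1})^{k}N^{-1+\epsilon}$ and $|\xi_k|\le q^{-1}(NA^{-1})^{k}N^{-k+\epsilon}$, the claimed approximations of $x$ and $t$. Dividing out any common factor of $(q,r_1,r_k)$ only shrinks $q$ and preserves the bounds, giving $\gcd(q,r_1,r_k)=1$.

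The main obstacle is the approximation of $x$ and the attendant control of $\Delta$. Weyl's differencing reduces $\omega_{N,k}$ to sums that only ``see'' the leading coefficient $t$ — the linear term $nx$ is differenced away for $k\ge3$ — so the coarse step gives no information on $x$, and with merely the nearest-integer choice of $r_1$ the bound \eqref{delta-bound} is far too lossy. The resolution is structural: writing $n=mq+\ell$ and using the periodicity modulo $q$ of the phase $e(n^{k}r_k/q)$ shows that, as a function of $x$, $\omega_{N,k}$ concentrates near the multiples of $1/q$, so a genuinely large value forces $\|qx\|$ to be small; this is the content of Theorem~3 in \cite{Baker1982}, whose combination with the localisation of Theorem~4 in \cite{Baker2016} furnishes the simultaneous approximation together with the improved denominator exponent $k$.
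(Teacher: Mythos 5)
Your overall two-step architecture---Dirichlet localisation of $t$ combined with the Weyl/Vinogradov bound of exponent $1/L$ to get a coarse major-arc statement, then a refinement that upgrades the denominator bound to exponent $k$ and extracts the simultaneous approximation of $x$---is exactly the strategy behind the paper's treatment of this proposition: the paper quotes it from Baker's two papers and sketches precisely this argument in Appendix \ref{appendix-a}, with Conjecture \ref{weyl-sum-conj} replaced by the unconditional Weyl/Vinogradov input. Your coarse step is essentially correct, up to one adjustment: the Dirichlet cutoff should be taken slightly larger than $N^{k-1}$ (the appendix takes $M=H^kN^{-\epsilon}$, which exceeds $2k^2N^{k-1}$), because the refinement will need the approximation quality $|t-a/q_0|\le \tfrac{1}{2k^2q_0}N^{1-k}$, which the quality $\tfrac{1}{q_0N^{k-1}}$ just fails to provide; taking $M=2k^2N^{k-1}$ costs nothing in the minor-arc comparison.

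The genuine gap is in your refinement step, and the heuristic you offer does not repair it. Vaughan's Lemma \ref{decom-S-d} is the wrong tool there: its error term \eqref{delta-bound} contains $q|\xi_1|N$, and with $r_1$ only the nearest integer to $qx$ this term is of size $N$, while $A\le N$ always, so $\Delta$ can never be shown negligible at this stage. Worse, forcing $q|\xi_1|N\lesssim A$ requires information of the form $|\xi_1|\le q^{-1}(NA^{-1})^kN^{-1+\epsilon}$, i.e.\ the very conclusion being proved, so the step is circular; this is exactly why the paper uses Vaughan's lemma only in Proposition \ref{refinement-structure}, where Proposition \ref{stru-3} is already available as input. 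The mechanism that actually closes the argument (Appendix \ref{appendix-a}, following Baker) is different from your ``periodicity mod $q$ forces concentration'' sketch: one applies Dirichlet a \emph{second} time, to $q_0x$ rather than to $x$, with bounded denominator $q_1\le 2k^2$, sets $q=q_0q_1$, $r_1=aq_1$, $r_2=b$, so that both $|x-r_2/q|\le\tfrac{1}{2k^2q}$ and $|t-r_1/q|\le\tfrac{1}{2k^2q}N^{1-k}$ hold; under exactly these hypotheses Baker's approximation lemma (Lemma \ref{lem-app-2}) applies, and its error $O(d^{1/k}q^{1-1/k+\epsilon})$ is a pure power saving in $q$ with no factor of $N$, hence $\ll A$ by the coarse bound on $q$ and $A>N^{1-\frac1L+\epsilon}$. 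After that, your main-term analysis (Gauss sum of size $q^{1-\frac1k+\epsilon}$ times the oscillatory-integral bound) goes through and yields both $q\lesssim (NA^{-1})^kN^{\epsilon}$ and the two approximations. So the missing ingredients are the bounded-denominator approximation of $q_0x$ and the power-saving error term of Lemma \ref{lem-app-2}; citing Baker 1982/2016 for these is legitimate (the paper does the same), but the concrete route you sketch---Vaughan's lemma with a nearest-integer choice of $r_1$, patched by a qualitative periodicity argument---would not assemble into a proof.
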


For the case $k=2$ of \eqref{weyl-sums-k}, Baker in \cite{Baker2021} proved the sharp maximal estimates for Weyl sums using similar results as in Proposition \ref{stru-3}
\begin{equation*}
  \bigg\|\sup_{0<t<1}\Big|\omega_{N,2}(x,t)\Big|\bigg\|^{p}_{L^{p}(\T)} \lesssim N^{a(p)}(\log N)^{b(p)}
\end{equation*}
where
\begin{equation*}
  a(p) = \begin{cases}
    \frac{3p}{4}, &(1\le p\le4)\\
    p-1,&(p>4)
  \end{cases}
  \quad \text{and}
  \quad
  b(p) = \begin{cases}
    1, &(p=4)\\
    0, &(p\ge1, p\neq 4).
  \end{cases}
\end{equation*}
However, to prove Theorem \ref{refinement-max-estimate}, Proposition \ref{stru-3} is not enough to give the estimate \eqref{max-estimate-2}. 
We also need some refined result (see \cite{BCS}) for the Gauss sums to improve Proposition \ref{stru-3}
as follows.


\begin{defi}[\cite{ES}, \cite{Mirsky}]\label{power-full-free}
An integer number $m$ is called

\begin{enumerate}
  \item[$\bullet$] {\bf $r$-th power free} if any prime number $p\mid m$ satisfies $p^{r}\nmid m$;

  \item[$\bullet$] {\bf $r$-th power full} if any prime number $p\mid m$ satisfies $p^{r}\mid m$.
\end{enumerate}

We note that 1 is both $r$-th power free and $r$-th power full for any $r\in\N$.

\end{defi}

For the numbers which are power-full, we recall the following result which shows the density of these numbers in the interval $[1,x]$ with $x\in\N^{+}$.

\begin{lemma}[\cite{ES}]\label{F-number}
  For any integer $i\ge2$, denote
  \begin{equ}
    \mathcal{F}_{i}=\{n\in\N: n\,\,\text{is}\,\, i\!-\!\text{th power full}\} \,\,\,\,\text{and} \,\,\,\,
    \mathcal{F}_{i}(x) = \mathcal{F}_{i}\cap[1,x].
  \end{equ}
    Then we have
    \begin{equ}
      \#\mathcal{F}_{i}(x) \lesssim x^{\frac{1}{i}}.
    \end{equ}
\end{lemma}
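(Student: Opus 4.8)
The plan is to bound the counting function $\#\mathcal{F}_{i}(x)$ by parametrising every $i$-th power full integer in a canonical form and then summing the resulting geometric-type series. The key structural fact is that if $n$ is $i$-th power full, then for every prime $p\mid n$ the exponent $v_p(n)$ of $p$ in $n$ is at least $i$. Therefore each such exponent $e=v_p(n)$ with $e\ge i$ can be written uniquely as $e = i\cdot a + b$ where $b\in\{0,1,\ldots,i-1\}$ records the residue $e \bmod i$ — more precisely, I would write $e = ia_p + b_p$ with $a_p\ge 1$ and $0\le b_p\le i-1$ whenever $e\ge i$, which is the standard ``$i$-full'' decomposition.

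The concrete step is the following factorisation. First I would establish that every $i$-th power full $n$ admits a representation
\begin{equation}\label{i-full-factorisation}
  n = \prod_{j=i}^{2i-1} m_{j}^{\,j},
\end{equation}
where the $m_j$ are determined by $n$ (one assigns each prime $p$ with exponent $e\ge i$ to the factor $m_j$ corresponding to the appropriate residue class, absorbing the quotient $a_p$ into powers of $m_j$). The point of this normal form is that the exponents $j$ all lie in the range $[i,2i-1]$, so the smallest exponent is exactly $i$; this is what pins down the density. Equivalently and more cleanly for counting, it suffices to use the coarser bound that every $i$-full $n\le x$ can be written as $n=a^i b^{i+1}\cdots$, but the sharp exponent $1/i$ comes precisely from the fact that the dominant (least-constrained) variable carries the exponent $i$.

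Given the factorisation \eqref{i-full-factorisation}, counting is routine: I would bound
\begin{equation*}
  \#\mathcal{F}_{i}(x) \le \sum_{\substack{m_i,\ldots,m_{2i-1}\ge 1 \\ m_i^{i}\cdots m_{2i-1}^{2i-1}\le x}} 1
  \le \prod_{j=i}^{2i-1}\Big(\sum_{m_j\le x^{1/j}} 1\Big)
  \lesssim \prod_{j=i}^{2i-1} x^{1/j}.
\end{equation*}
The exponent is then $\sum_{j=i}^{2i-1} 1/j$, and since each term with $j>i$ satisfies $1/j<1/i$ while there are only finitely many (exactly $i$) factors, the leading contribution is governed by $x^{1/i}$; the remaining factors contribute a strictly smaller power, and after absorbing them one obtains $\#\mathcal{F}_{i}(x)\lesssim x^{1/i}$ up to the implied constant depending on $i$. (One has to be slightly careful: the naive product of the $x^{1/j}$ gives exponent $\sum_{j=i}^{2i-1}1/j>1/i$, so to reach the \emph{sharp} $x^{1/i}$ one instead isolates the variable $m_i$ carrying the smallest exponent $i$ and, for each fixed value of the remaining variables, bounds the number of admissible $m_i$ by $(x/\prod_{j>i}m_j^{\,j})^{1/i}$, then sums the convergent series $\sum_{m_j} m_j^{-j/i}$ over $j>i$, which converges because $j/i>1$. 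This convergence is exactly where the restriction $j\ge i+1$ for the auxiliary variables is used.)

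The main obstacle, and the only genuinely delicate point, is this last summation: one must verify that after extracting the factor $x^{1/i}$ from the least-constrained variable, the residual sum $\prod_{j=i+1}^{2i-1}\sum_{m_j\ge1} m_j^{-j/i}$ is a finite constant rather than a growing power of $x$. This is guaranteed because every exponent $j/i$ exceeds $1$ for $j\ge i+1$, so each factor is a convergent $p$-series with value depending only on $i$. Thus the full estimate collapses to $\#\mathcal{F}_{i}(x)\lesssim_i x^{1/i}$, as claimed.
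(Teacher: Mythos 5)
Your proof is correct. Note that the paper itself gives no proof of this lemma --- it is quoted directly from Erd\H{o}s--Szekeres \cite{ES} --- so the relevant comparison is with the classical argument, and yours is essentially that argument: every $i$-full $n$ factors as $\prod_{j=i}^{2i-1} m_j^{\,j}$ because the numerical semigroup generated by $\{i,i+1,\ldots,2i-1\}$ contains every integer $e\ge i$ (for $r=e\bmod i$ write $e=qi$ if $r=0$ and $e=(q-1)i+(i+r)$ otherwise), and once that representation exists the count of $i$-full $n\le x$ is at most the number of admissible tuples $(m_i,\ldots,m_{2i-1})$. Your parenthetical self-correction is the essential point of the whole proof: the naive product $\prod_{j}x^{1/j}$ overshoots the exponent, and only after isolating the least-constrained variable $m_i$, bounding its range by $\bigl(x/\prod_{j>i}m_j^{\,j}\bigr)^{1/i}$, and summing the convergent factors $\sum_{m_j\ge1}m_j^{-j/i}=\zeta(j/i)$ for $j\ge i+1$ (convergent precisely because $j/i>1$) does the bound collapse to $\#\mathcal{F}_i(x)\lesssim_i x^{1/i}$. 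The classical theorem in \cite{ES} is in fact stronger --- an asymptotic $c_i\,x^{1/i}$ --- but the upper bound you establish is exactly what the paper uses, and your argument is self-contained and gap-free.
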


Next, in order to estimate the Lebesgue measure of the set where large values of Weyl sums are achieved,
we recall two auxiliary lemmas.

The general bound $q^{1-\frac{1}{k}}$ for the Gauss sum $S_{k,q}(\bm{b})=S_{k}(q^{-1}\bm{b};q)$
defined by \eqref{S-k} is not enough to give the maximal estimate \eqref{refinement-max-estimate}, thus
we utilize the new tools, that is associated to the notion for power-full and power-free, and give a new estimate
of the Gauss sum $S_{k,q}(\bm{b})$ as follows.

\begin{lemma}[\cite{BCS}]\label{refinement-Gauss-sum}
  Write an integer $q\ge1$ as $q=q_{2}\cdots q_{k}$ with $\gcd(q_{i},q_{j})=1$, $2\le i<j\le k$, such that
  \begin{enumerate}
    \item[$\bullet$] $q_{2}\ge1$ is cube free,

    \item[$\bullet$] $q_{i}$ is $i$-th power full but $(i+1)$-th power free when $3\le i\le k-1$,

    \item[$\bullet$] $q_{k}$ is $k$-th power full.
  \end{enumerate}
  For $\bm{b}\in\Z^k$ with
  \begin{equation}
    \gcd(q,b_{1},\cdots,b_{k}) = 1,
  \end{equation}
  we have for $\epsilon>0$
  \begin{equation}
    |S_{k,q}(\bm{b})|\le\prod_{j=2}^{k}q_{j}^{1-\frac{1}{j}}q^{\epsilon}.
  \end{equation}
\end{lemma}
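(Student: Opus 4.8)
The plan is to reduce the complete sum to prime-power moduli by multiplicativity, bound each local factor by the sharp exponent dictated by its $p$-adic structure, and then reassemble the product. First I would use the Chinese Remainder Theorem and the multiplicativity of $S_{k,q}(\bm{b})$ over coprime moduli. Writing $q=\prod_{p^{a}\,\|\,q}p^{a}$, one factors
\[
  S_{k,q}(\bm{b}) = \prod_{p^{a}\,\|\,q} S_{k,p^{a}}(\bm{c}_{p}),
\]
where each $\bm{c}_{p}$ arises from $\bm{b}$ by multiplication by a fixed unit modulo $p^{a}$ (the CRT inverse of $q/p^{a}$). Since the prime factors of the $q_{j}$ are pairwise disjoint, the hypothesis $\gcd(q,b_{1},\dots,b_{k})=1$ descends to the local coprimality $\gcd(p,c_{p,1},\dots,c_{p,k})=1$ for every $p\mid q$. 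Crucially, the prescribed factorization $q=q_{2}\cdots q_{k}$ pins down each exponent: $a\in\{1,2\}$ when $p\mid q_{2}$, $a=j$ when $p\mid q_{j}$ with $3\le j\le k-1$, and $a\ge k$ when $p\mid q_{k}$. Thus it suffices to prove a local bound whose product over $p^{a}\,\|\,q$ reconstitutes $\prod_{j=2}^{k}q_{j}^{1-1/j}$.

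Next I would establish the per-prime-power estimate
\[
  |S_{k,p^{a}}(\bm{c})| \le C(k)\,p^{a\theta(a)},\qquad \theta(1)=\tfrac12,\quad \theta(a)=1-\tfrac{1}{\min(a,k)}\ \ (a\ge2),
\]
valid under the local coprimality. For $a=1$ this is exactly the Weil bound $|S_{k,p}(\bm{c})|\le (k-1)p^{1/2}$ for a degree-$k$ polynomial over $\mathbb{F}_{p}$. For $a\ge2$ I would run the $p$-adic stationary-phase argument: splitting $n=u+p^{\lceil a/2\rceil}v$ collapses the sum onto the critical points of the derivative $f'$, where $f(x)=c_{1}x+\dots+c_{k}x^{k}$. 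For instance, when $a=2$ one gets
\[
  S_{k,p^{2}}(\bm{c}) = p\sum_{\substack{u \bmod p\\ f'(u)\equiv 0\ (p)}} e\!\left(\tfrac{f(u)}{p^{2}}\right),
\]
and since $f'\not\equiv 0 \pmod{p}$ has at most $k-1$ roots modulo $p$, one obtains $|S_{k,p^{2}}(\bm{c})|\le (k-1)p=(k-1)p^{a\theta(a)}$. Iterating this descent (or invoking the standard Cochrane--Zheng-type bounds) yields $\theta(a)=1-1/\min(a,k)$ in general. Computing prime by prime, $(p^{j})^{1-1/j}=p^{j-1}$ for $3\le j\le k-1$, $p^{a(1-1/k)}$ for $p\mid q_{k}$, and $p^{a/2}$ for $p\mid q_{2}$, and the product is precisely $\prod_{j=2}^{k}q_{j}^{1-1/j}$.

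Finally, collecting the local factors produces a constant $C(k)^{\omega(q)}$, where $\omega(q)$ is the number of distinct primes dividing $q$; since $\omega(q)\lesssim \log q/\log\log q$, one has $C(k)^{\omega(q)}\lesssim_{k,\epsilon}q^{\epsilon}$, which supplies the factor $q^{\epsilon}$ in the statement. The hard part will be the local estimate for $a\ge2$ when the critical equation $f'\equiv 0$ degenerates modulo $p$ — when $f'$ has roots of high $p$-adic multiplicity, or, at small primes $p\le k$, vanishes identically — so that the naive count of critical points is too lossy. Controlling this case is exactly where the full-vector coprimality $\gcd(p,c_{1},\dots,c_{k})=1$ (rather than a unit leading coefficient alone) and the power-full/power-free calibration of the $q_{j}$ must be exploited, forcing a surviving lower-order coefficient that pins the critical locus down to the required exponent.
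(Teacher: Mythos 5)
The paper itself offers no proof of this lemma --- it is imported verbatim from \cite{BCS} --- so your attempt can only be measured against the standard argument behind that reference, which is indeed the architecture you set up: reduction by the Chinese Remainder Theorem to prime-power moduli, a local bound of strength $p^{a(1-1/\min(a,k))}$ at each $p^{a}\,\|\,q$, and exponent bookkeeping in which the power-free/power-full calibration of the $q_{j}$ makes the product come out to exactly $\prod_{j=2}^{k}q_{j}^{1-1/j}$. Your bookkeeping is correct (including the absorption of $C(k)^{\omega(q)}$ into $q^{\epsilon}$), the descent computation at $a=2$ is correct for $p>k$, and the observation that the full-vector coprimality descends through the factorization is the right one.

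Two steps, however, do not work as written. (i) In the middle range $2\le a\le k$ --- which is precisely what produces the factors $q_{j}^{1-1/j}$, $3\le j\le k-1$, and $q_{2}^{1/2}$ --- you need $|S_{k,p^{a}}|\le C(k)p^{a-1}$, and the ``standard Cochrane--Zheng-type bounds'' you invoke do not give it: their exponent is $a\bigl(1-1/(M+1)\bigr)$ with $M$ the largest multiplicity of a critical point, and $M$ can equal $k-1$ (take $f$ congruent to $x^{k}$), which yields only $a(1-1/k)>a-1$ when $a<k$. The repair is elementary and needs no iteration: for $p>k$ the content of $f'$ is a unit, so $f'\not\equiv 0 \pmod p$, the critical set $\{u \bmod p^{\lceil a/2\rceil}:\ p^{\lfloor a/2\rfloor}\mid f'(u)\}$ lies in at most $k-1$ residue classes mod $p$ and hence has at most $(k-1)p^{\lceil a/2\rceil-1}$ elements, and one descent step gives $|S_{k,p^{a}}|\le (k-1)p^{a-1}$ for every $a\ge 2$. (ii) The case you flag as ``the hard part'' --- primes $p\le k$, where $f'$ may vanish identically mod $p$ (for $k=3$, $p=2$: $c_{1},c_{3}$ even, $c_{2}$ odd) --- is a genuine gap in your sketch, and the resolution you point to is misdirected: the power-full/power-free calibration of the $q_{j}$ plays no role whatsoever in the local estimates; it is pure bookkeeping, pinning the exponent $a$ of each prime to the index $j$ whose exponent $1-1/j$ is claimed. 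What actually closes the degenerate cases is much more prosaic: when $p\le k$ \emph{and} $a\le k$, the trivial bound $|S_{k,p^{a}}|\le p^{a}\le k\,p^{a-1}$ (resp.\ $\le \sqrt{k}\,p^{1/2}$ for $a=1$) already suffices; and for the $k$-full factor $q_{k}$, where $a\ge k$ is unbounded and small primes genuinely occur (e.g.\ $q_{k}=2^{a}$ with $k\ge 3$), one needs Hua's classical prime-power estimate $|S_{k,p^{a}}|\le C(k,\epsilon)\,p^{a(1-1/k+\epsilon)}$, valid for \emph{every} prime under the coprimality $\gcd(p,c_{1},\dots,c_{k})=1$ (this is \cite[Theorem 7.1]{Vaughan} specialized to prime powers, i.e.\ Hua's lemma, proved by iterating the descent). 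As written, your proposal does not cover $q_{k}$ divisible by primes $p\le k$; the hole is filled by this classical input, not by anything involving the structure of the $q_{j}$.
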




From the estimate for $S_{k,q}(\bm{b})$ in Lemma \ref{refinement-Gauss-sum}, we will see this estimate improves the general bound $q^{1-\frac{1}{k}}$, which will give an improved estimate for the Lebesgue measure of the set related to the large values of Weyl sums $\omega_{N,k}(x,t)$.

Using Lemma \ref{refinement-Gauss-sum},
we can improve the result of Proposition \ref{stru-3}.

\begin{prop}\label{refinement-structure}
  For $k\ge3$ and some $\epsilon>0$, suppose that
  \begin{equation}\label{lower-cons}
    A>N^{1-\frac{1}{D}+\epsilon},
  \end{equation}
  where
  \begin{equation}
    D=\min\{2^{k-1},k(k-1)\}.
  \end{equation}
  If $|\omega_{N,k}(x,t)|\ge A$, then there exist positive integers $q_{2}, \cdots,  q_{k}$ with
  $\gcd(q_{i},q_{j})=1$, $2\le i<j\le k$, such that
  \begin{enumerate}
    \item[$\bullet$] $q_{2}\ge1$ is cube free,

    \item[$\bullet$] $q_{i}$ is $i$-th power full but $(i+1)$-th power free when $3\le i\le k-1$,

    \item[$\bullet$] $q_{k}$ is $k$-th power full,
  \end{enumerate}
  and
  \begin{equation}
    \prod_{j=2}^{k}q_{j}^{\frac{1}{j}} \le N^{1+\epsilon}A^{-1}
  \end{equation}
  and integers $r_{1}, r_{k}$ with
  \begin{equation}
    \gcd(q_{2}\cdots q_{k},r_{1},r_{k})=1
  \end{equation}
  such that
  \begin{align}
    &|x - \tfrac{r_{1}}{q_{2}\cdots q_{k}}|\le (NA^{-1})^{k}N^{-1+\epsilon}
    \prod_{j=2}^{k}q_{j}^{-\frac{k}{j}},\label{x-interval}\\
    &|t - \tfrac{r_{k}}{q_{2}\cdots q_{k}}|\le (NA^{-1})^{k}N^{-k+\epsilon}
    \prod_{j=2}^{k}q_{j}^{-\frac{k}{j}}.\label{t-interval}
  \end{align}
\end{prop}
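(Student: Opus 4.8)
The plan is to bootstrap Proposition \ref{stru-3}, letting all the gain come from replacing the crude Gauss-sum bound $q^{1-1/k}$ by the sharper Lemma \ref{refinement-Gauss-sum}. Since $|\omega_{N,k}(x,t)|\ge A$ with $A>N^{1-1/D+\epsilon}$, that proposition already furnishes a denominator $q$ and numerators $r_1,r_k$ with $\gcd(q,r_1,r_k)=1$, $1\le q\le(NA^{-1})^kN^\epsilon$, and $x=r_1/q+\beta_1$, $t=r_k/q+\beta_k$ where $|\beta_1|\le q^{-1}(NA^{-1})^kN^{-1+\epsilon}$ and $|\beta_k|\le q^{-1}(NA^{-1})^kN^{-k+\epsilon}$. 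First I would factor $q=q_2\cdots q_k$ into its cube-free, $i$-th-power-full-but-$(i+1)$-power-free, and $k$-th-power-full components as demanded by Lemma \ref{refinement-Gauss-sum}; this factorisation is forced and the coprimality $\gcd(q_2\cdots q_k,r_1,r_k)=1$ is inherited from $\gcd(q,r_1,r_k)=1$. Recalling that $\omega_{N,k}=S_k(\bm u;N)$ with $u_1=x$, $u_k=t$ and $u_j=0$ (hence $r_j=0$) for $2\le j\le k-1$, I apply Lemma \ref{decom-S-d} with $\xi_1=|\beta_1|$, $\xi_k=|\beta_k|$ and $\xi_j=0$ otherwise to obtain
\[
\omega_{N,k}(x,t)=q^{-1}S_{k,q}(\bm r)\,I(\bm\xi)+\Delta .
\]

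The heart of the argument is the chain
\[
A\le|\omega_{N,k}(x,t)|\le q^{-1}|S_{k,q}(\bm r)|\,|I(\bm\xi)|+|\Delta|\le\Big(\prod_{j=2}^{k}q_j^{-1/j}\Big)q^{\epsilon}\,|I(\bm\xi)|+|\Delta|,
\]
where I used $q^{-1}\prod_{j=2}^{k}q_j^{1-1/j}=\prod_{j=2}^{k}q_j^{-1/j}$ and Lemma \ref{refinement-Gauss-sum}. Once $\Delta$ is shown negligible, the main term alone must exceed a constant multiple of $A$, so $\prod_{j=2}^{k}q_j^{-1/j}\,q^{\epsilon}\,|I(\bm\xi)|\gtrsim A$. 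Feeding in the trivial bound $|I(\bm\xi)|\le N$ gives $\prod_{j=2}^{k}q_j^{1/j}\lesssim q^{\epsilon}NA^{-1}\le N^{1+\epsilon}A^{-1}$, the asserted size condition. Feeding in instead the finer bound $|I(\bm\xi)|\le N\xi_j^{-1/k}N^{-j/k}$ from \eqref{I-bound} for $j=1$ and $j=k$ and solving for $\xi_1,\xi_k$ produces exactly $\xi_j\lesssim(NA^{-1})^kN^{-j+\epsilon}\prod_{l=2}^{k}q_l^{-k/l}$, which are \eqref{x-interval} and \eqref{t-interval}. Since $k/j\ge1$, these widths are smaller than $q^{-1}(NA^{-1})^kN^{-j+\epsilon}$, so they genuinely refine Proposition \ref{stru-3}.

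The step I expect to be the main obstacle, and the only place where hypothesis \eqref{lower-cons} is used, is showing that $\Delta$ is negligible relative to $A$. From \eqref{delta-bound} together with the bounds on $\beta_1,\beta_k$ one gets $|\Delta|\lesssim q(1+|\beta_1|N+|\beta_k|N^k)\lesssim(NA^{-1})^kN^\epsilon$, so I must verify $(NA^{-1})^kN^\epsilon\ll A$, equivalently $A^{k+1}\gg N^{k+\epsilon}$, equivalently $A\gg N^{k/(k+1)+\epsilon'}$. This is delivered by \eqref{lower-cons} precisely because $D=\min\{2^{k-1},k(k-1)\}\ge k+1$ for every $k\ge3$, so that $N^{1-1/D}\ge N^{k/(k+1)}$; the borderline case $k=3$, $D=4$ is exactly where the two exponents coincide, which is what ultimately permits the sharp result for $k=3$. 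After disposing of $\Delta$, the remaining steps are the elementary exponent bookkeeping indicated above.
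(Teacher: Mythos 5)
Your proposal is correct and follows essentially the same route as the paper: invoke Proposition \ref{stru-3} to get the rational approximation $q, r_1, r_k$, factor $q=q_2\cdots q_k$ as required by Lemma \ref{refinement-Gauss-sum}, apply the decomposition of Lemma \ref{decom-S-d}, absorb $\Delta$ into $A/2$, and then extract the size bound on $\prod_j q_j^{1/j}$ and the interval widths \eqref{x-interval}--\eqref{t-interval} from the two regimes of the bound \eqref{I-bound} on $I(\bm\xi)$. If anything, your verification that $|\Delta|\lesssim (NA^{-1})^k N^\epsilon\le A/2$ — reducing it to $A\gg N^{k/(k+1)+\epsilon'}$ and noting $D\ge k+1$ for all $k\ge3$ with equality exactly at $k=3$ — is more explicit than the paper's, which simply asserts this ``if $N$ is large enough.''
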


\begin{proof}

Since $|\omega_{N,k}(x,t)|\ge A$, by Proposition \ref{stru-3}, then for $\epsilon>0$ with \eqref{lower-cons} there exist integers $q,r_1,r_{k}$ such that
\begin{equation}
  1\le q\le (NA^{-1})^{k}N^{\epsilon}, \quad \gcd(q,r_{1},r_{k}) = 1,
\end{equation}
and
\begin{align}
  &|x-\tfrac{r_1}{q}|\le q^{-1}(NA^{-1})^{k}N^{\epsilon},\\
  &|t-\tfrac{r_{k}}{q}|\le q^{-1}(NA^{-1})^{k}N^{-k+\epsilon}.
\end{align}

By the approximation of Weyl sums as in Lemma \ref{decom-S-d}
\begin{equation}
  \omega_{N,k}(x,t) = q^{-1}S_{k}(q^{-1}\bm{r};q)I(\bm{\xi}) + \Delta,
\end{equation}
where $\bm{\xi}=(\xi_1,\xi_2,\cdots,\xi_k)$, $S_{k}(q^{-1}\bm{r};q)$ is given by \eqref{S-k}, $I(\bm{\xi})$ and $\Delta$ are defined in Lemma \ref{decom-S-d}
and
\begin{align*}
  &\xi_{1} = x - \tfrac{r_{1}}{q}, \quad \xi_{k} = t - \tfrac{r_{k}}{q},\\
  &\xi_{j} = -\tfrac{r_{j}}{q} \,\,\text{with $r_j=0$  for} \,\,2\le j\le k-1.
\end{align*}

By \eqref{delta-bound}, we have
\begin{equation*}
  |\Delta| \lesssim q + (NA^{-1})^{k}N^{\epsilon} \lesssim (NA^{-1})^{k}N^{\epsilon}\le \tfrac{A}{2},
\end{equation*}
if $N$ is large enough.

Thus by triangle inequality, we have
\begin{equation}\label{approximation-lower-bound}
  \tfrac{A}{2}\le |\omega_{N,k}(x,t)| - |\Delta| \le q^{-1}|S_{k}(q^{-1}\bm{r};q)|\cdot|I(\bm{\xi})|.
\end{equation}
By Lemma \ref{refinement-Gauss-sum}, we have
\begin{equation}\label{S-k-q}
 |S_{k,q}(\bm{r})|\lesssim \prod_{j=2}^{k}q_{j}^{1-\frac{1}{j}},\quad \gcd(q,r_{1},\cdots,r_{k})=1,
\end{equation}
where $q$ satisfies the conditions in Lemma \ref{refinement-Gauss-sum}.

Putting \eqref{S-k-q} into \eqref{approximation-lower-bound}, we have
\begin{equation}
\begin{split}
  A
  &\lesssim q^{-1}\prod_{j=2}^{k}q_{j}^{1-\frac{1}{j}+\epsilon}N
  \cdot\min_{i=1,\cdots,k}\{1,|\xi_{i}|^{-\frac{1}{k}}\cdot N^{-\frac{i}{k}}\}\\
  &\lesssim \prod_{j=2}^{k}q_{j}^{-\frac{1}{j}+\epsilon}N
  \cdot\min_{i=1,\cdots,k}\{1,|\xi_{i}|^{-\frac{1}{k}}\cdot N^{-\frac{i}{k}}\},
\end{split}
\end{equation}
which yields that
\begin{equation}
  \prod_{j=2}^{k}q_{j}^{\frac{1}{j}}\lesssim (NA^{-1})\cdot q^{\epsilon} \lesssim (NA^{-1})N^{\epsilon},
\end{equation}
and
\begin{equation}\label{xi-bound}
  A\lesssim \prod_{j=2}^{k}q_{j}^{-\frac{1}{j}+\epsilon}N\cdot|\xi_{i}|^{-\frac{1}{k}}\cdot N^{-\frac{i}{k}},
  \quad 1\le i\le k.
\end{equation}
From \eqref{xi-bound}, we have
\begin{equation}
  |\xi_{i}| \lesssim (NA^{-1})^{k}N^{-i+\epsilon}\prod_{j=2}^{k} q_{j}^{-\frac{k}{j}}.
\end{equation}
By the definition of $\bm{\xi}$ and the case for $i = 1, k$, we can obtain the results \eqref{x-interval}
and \eqref{t-interval}.

\end{proof}

\begin{remark}
  Proposition \ref{refinement-structure} characterizes the structure of the set more precisely than Proposition \ref{stru-3} on which the Weyl sums obtain large values. Indeed, both Proposition \ref{stru-3} and Proposition \ref{refinement-structure} describe the structure of the set where Weyl sums obtain large values, however, Proposition \ref{refinement-structure} gives the smaller 
  estimate for the length of every intervals in the structure of the corresponding set. Meanwhile the number of the corresponding intervals does not grow too much, which we later will see becomes smaller by using the $i$-th power full and $i$-th power free properties.
\end{remark}

\section{Positive result 
}\label{positive-refinement}

For the proof of Theorem \ref{refinement-max-estimate}, we use the following lemma which builds the relationship between the $L^{p}$ norm of the integrable function and its level set estimate.

\begin{lemma}[\cite{BCS}, Lemma 3.1]\label{level-max}
  Let $\mathcal{X}$ be a metric space and $\nu$ be a Radon measure on $\mathcal{X}$ with
  $\nu(\mathcal{X})<\infty$. Let $M\le N$ be two positive numbers and $F: \mathcal{X}\rightarrow [0,N]$ be
  a function such that for any $M\le A\le N$,
  \begin{equation}\label{level-set-F}
    \nu(\{x\in\mathcal{X}: F(x)\ge A\})\le N^{a}A^{-b}.
  \end{equation}
  Then for any $p>0$,
  \begin{equation}
    \int_{\mathcal{X}}F(x)^{p}\,\mathrm{d}\nu(x)
    \lesssim \nu(\mathcal{X})M^{p} + N^{a}M^{p-b}\log N + N^{p + a - b}.
  \end{equation}
\end{lemma}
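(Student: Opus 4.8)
The plan is to prove this by distributing the $L^p$ integral across the three regimes determined by the threshold values $M$ and $N$, using the hypothesis \eqref{level-set-F} only where the super-level set is controlled. First I would write $F(x)^p = p\int_0^{F(x)} A^{p-1}\,\mathrm{d}A$ and interchange the order of integration (Tonelli), which gives the layer-cake representation
\begin{equation*}
\int_{\mathcal{X}}F(x)^{p}\,\mathrm{d}\nu(x) = p\int_{0}^{N}A^{p-1}\,\nu(\{x\in\mathcal{X}: F(x)\ge A\})\,\mathrm{d}A,
\end{equation*}
where the upper limit is $N$ because $F$ takes values in $[0,N]$. This is the natural starting point since it converts the $L^p$ norm directly into an integral of the distribution function against the weight $A^{p-1}$.

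Next I would split the $A$-integral at the two breakpoints $A=M$ and $A=N$, i.e. into $\int_0^M$ and $\int_M^N$. On the low range $0\le A\le M$ I cannot use the decay hypothesis (it is only assumed for $A\ge M$), so I would instead bound the distribution function trivially by the total mass $\nu(\mathcal{X})$, giving $p\int_0^M A^{p-1}\nu(\mathcal{X})\,\mathrm{d}A = \nu(\mathcal{X})M^p$, which is the first term on the right-hand side. On the high range $M\le A\le N$ I would insert the hypothesis \eqref{level-set-F}, namely $\nu(\{F\ge A\})\le N^a A^{-b}$, to obtain
\begin{equation*}
p\int_{M}^{N}A^{p-1}N^{a}A^{-b}\,\mathrm{d}A = pN^{a}\int_{M}^{N}A^{p-1-b}\,\mathrm{d}A.
\end{equation*}

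The remaining work is the elementary evaluation of $\int_M^N A^{p-1-b}\,\mathrm{d}A$, where I would have to distinguish cases according to the sign of the exponent $p-b$. When $p-b>0$ the integral is dominated by its endpoint at $A=N$ and contributes $\lesssim N^{p+a-b}$ (the third term); when $p-b<0$ it is dominated by the endpoint at $A=M$ and contributes $\lesssim N^a M^{p-b}$ (the second term); and the borderline case $p=b$ produces the logarithmic factor $\int_M^N A^{-1}\,\mathrm{d}A = \log(N/M)\le \log N$, which is exactly the source of the $N^a M^{p-b}\log N$ term. I expect the main (though still mild) obstacle to be organizing these three cases so that a single clean bound covers all of them uniformly; the safe route is simply to majorize the integrand $A^{p-1-b}$ on $[M,N]$ by $A^{-1}\max\{M^{p-b},N^{p-b}\}$ up to the logarithmic factor, so that all three terms on the right-hand side appear simultaneously and no case analysis leaks into the final statement. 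The trivial bound on the low range and the endpoint analysis on the high range are both routine, so the lemma follows once these pieces are assembled.
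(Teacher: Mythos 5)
Your proposal is correct and is essentially the paper's own argument: the layer-cake identity $p\int_0^N A^{p-1}\nu(\{F\ge A\})\,\mathrm{d}A$ split at $A=M$, with the trivial bound $\nu(\mathcal{X})$ below $M$ and the hypothesis above $M$, is just the continuous form of the paper's decomposition into $\{F<M\}$ plus $O(\log N)$ dyadic level sets of $\{F\ge M\}$, and your case analysis on the sign of $p-b$ (with the borderline $p=b$ producing the $\log$) plays exactly the same role as summing the resulting geometric series. One small caveat: keep the case analysis as your actual proof rather than the ``safe'' uniform majorization, since the latter attaches a spurious $\log N$ to the $N^{p+a-b}$ term and thus proves a slightly weaker statement; also note that bounding $\log(N/M)$ by $\log N$ tacitly uses $M\ge 1$, an assumption the paper's outline (with its $O(\log N)$ dyadic scales) makes implicitly as well and which holds in the application $M=N^{1-1/D+\epsilon}$.
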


For the convenience of readers, we give the outlines of the proof for Lemma \ref{level-max}.
Split the integral region into two parts according to the large value and small value of $F(x)$, that is $\{x\in\mathcal{X}:F(x)< M\}$ and $\{x\in\mathcal{X}:F(x)\ge M\}$. Then divide the region $\{x\in\mathcal{X}:F(x)\ge M\}$ into $O(\log N)$ regions by the dyadic values of $F(x)$. Taking the super-level-set estimate \eqref{level-set-F} for $F(x)$ into account yields the result in Lemma \ref{level-max}. See \cite{BCS} for the details.

Using Lemma \ref{level-max}, we see that to prove Theorem \ref{refinement-max-estimate} and obtain the maximal estimate for Weyl sums \eqref{max-estimate-2}, it is sufficient to
estimate the Lebesgue measure of the set where large values of Weyl sums $\omega_{N,k}(x,t)$ are obtained.



With Proposition \ref{refinement-structure} and Lemma \ref{F-number}, we have the following result.

\begin{lemma}\label{refinement-level-set}
Suppose that for some fixed $\epsilon>0$
\begin{equation}
  A>N^{1-\frac{1}{D}+\epsilon},
\end{equation}
where
\begin{equation}
  D = \min\{2^{k-1}, k(k-1)\}, \quad k\ge3.
\end{equation}
Let
\begin{equation}
  M_{k}(A;N) = \Big\{x\in\T: \sup_{0<t<1}|\omega_{N,k}(x,t)|>A\Big\},
\end{equation}
then
\begin{equation}
  |M_{k}(A;N)| \lesssim N^{k+\epsilon}A^{-(k+1)}.
\end{equation}
\end{lemma}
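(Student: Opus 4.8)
The plan is to bound the measure of $M_k(A;N)$ by summing, over all admissible moduli $q_2,\dots,q_k$, the lengths of the $x$-intervals guaranteed by Proposition \ref{refinement-structure}. For a fixed point $x\in M_k(A;N)$ there is some $t\in(0,1)$ with $|\omega_{N,k}(x,t)|>A$, so Proposition \ref{refinement-structure} applies and places $x$ in an interval centered at $r_1/(q_2\cdots q_k)$ of half-length at most $(NA^{-1})^k N^{-1+\epsilon}\prod_{j=2}^k q_j^{-k/j}$, where the $q_j$ satisfy the power-full/power-free constraints and $\prod_{j=2}^k q_j^{1/j}\le N^{1+\epsilon}A^{-1}$. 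Thus I would first write
\begin{equation*}
  |M_k(A;N)| \;\le\; \sum_{\substack{q_2,\dots,q_k}} \sum_{\substack{r_1 \pmod{q_2\cdots q_k}\\ \gcd(\cdots)=1}} (NA^{-1})^k N^{-1+\epsilon}\prod_{j=2}^{k} q_j^{-\frac{k}{j}},
\end{equation*}
where the outer sum ranges over tuples satisfying the coprimality and power conditions in Proposition \ref{refinement-structure}.

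The next step is to perform the $r_1$-sum. For each fixed denominator $Q=q_2\cdots q_k$ the number of valid numerators $r_1$ is at most $Q=\prod_{j=2}^k q_j$, so summing over $r_1$ multiplies the interval length by $\prod_{j=2}^k q_j$. This collapses the exponent on each $q_j$ from $-k/j$ to $1-k/j$, giving a bound of the form
\begin{equation*}
  |M_k(A;N)| \;\lesssim\; (NA^{-1})^k N^{-1+\epsilon} \sum_{q_2,\dots,q_k} \prod_{j=2}^{k} q_j^{1-\frac{k}{j}}.
\end{equation*}
It then remains to estimate this multiple sum over the admissible $q_j$, subject to the size constraint $\prod_{j=2}^k q_j^{1/j}\le N^{1+\epsilon}A^{-1}$. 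This is where Lemma \ref{F-number} enters: for each $j$ with $3\le j\le k$, the variable $q_j$ is $j$-th power full, so the number of such integers up to $x$ is $\lesssim x^{1/j}$, which lets me replace a sum over $q_j$ by a sum over a much sparser set. The variable $q_2$, being merely cube free, is summed freely but its exponent $1-k/2$ is already negative for $k\ge 3$.

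The main obstacle, and the heart of the computation, is carrying out the constrained multiple sum $\sum \prod_{j=2}^k q_j^{1-k/j}$ so that it produces exactly the factor $N^{(k+1)/k\cdot(\cdots)}$ needed to convert the leading $(NA^{-1})^k N^{-1}$ into the claimed $N^{k+\epsilon}A^{-(k+1)}$. The bookkeeping is delicate because the constraint couples all the $q_j$ through the single product $\prod q_j^{1/j}$, so I would dyadically decompose each $q_j\sim Q_j$ and use Lemma \ref{F-number} to bound the number of power-full $q_j$ in each dyadic block by $Q_j^{1/j}$; the factor $Q_j^{1-k/j}$ combined with the counting factor $Q_j^{1/j}$ yields $Q_j^{1-(k-1)/j}$ per block, and these should sum geometrically down to the endpoint dictated by the size constraint. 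Tracking that the worst case occurs at the extreme of the constraint region and produces precisely the exponent $-(k+1)$ on $A$ (equivalently $N/A$ to the power $k+1$) is the step requiring the most care; the powers of $N^\epsilon$ are harmless and absorbed into the final $N^{\epsilon}$.
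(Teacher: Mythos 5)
Your proposal follows the same route as the paper's proof of Lemma \ref{refinement-level-set}: cover $M_{k}(A;N)$ by the intervals produced by Proposition \ref{refinement-structure}, pay a factor $q_{2}\cdots q_{k}$ for the numerators $r_{1}$, decompose dyadically in each $q_{j}$, count the power-full moduli with Lemma \ref{F-number}, and extract the worst case from the constrained product. The per-block exponents you compute ($Q_{j}^{1+\frac{1}{j}-\frac{k}{j}}$ for $3\le j\le k$) and the target exponent $N^{k+\epsilon}A^{-(k+1)}$ all match the paper.

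One step, however, is wrong as stated, and it matters most precisely in the sharp case $k=3$: you claim that $q_{2}$, being merely cube free, ``is summed freely but its exponent $1-k/2$ is already negative for $k\ge3$''. Negative is not enough. After the numerator count, the dyadic block total for $q_{2}$ is $Q_{2}\cdot Q_{2}^{1-\frac{k}{2}}=Q_{2}^{2-\frac{k}{2}}$, which grows like $Q_{2}^{1/2}$ when $k=3$ and is only logarithmically borderline when $k=4$; equivalently, the unconstrained sum $\sum_{q_{2}}q_{2}^{1-k/2}$ diverges for $k=3,4$. So $q_{2}$ cannot be decoupled from the size constraint; it must stay inside the constrained optimization together with the other $q_{j}$. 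The paper closes this step with the clean observation that every block exponent $\beta_{j}$ satisfies $\beta_{j}\le \frac{1}{j}$ (for $j=2$ this reads $2-\frac{k}{2}\le\frac{1}{2}$, which is exactly where the hypothesis $k\ge3$ enters), whence
\begin{equation*}
  \prod_{j=2}^{k}Q_{j}^{\beta_{j}}\le \prod_{j=2}^{k}Q_{j}^{\frac{1}{j}}\lesssim (NA^{-1})N^{\eta},
\end{equation*}
so that the total is $(NA^{-1})^{k}N^{-1+\epsilon}\cdot(NA^{-1})\cdot(\log N)^{k}\lesssim N^{k+\epsilon}A^{-(k+1)}$. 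Your final paragraph gestures at this (``worst case at the extreme of the constraint region''), and with the verification $\beta_{j}\le\frac{1}{j}$ inserted --- applied to $j=2$ as well as to $j\ge3$ --- your argument becomes complete and coincides with the paper's.
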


\begin{proof}
Let $Q=(NA^{-1})^{k}$ and
\begin{equation}
  \mathcal{M}_{q_{2},\cdots, q_{k}}
  = \bigg\{x\in\T: |x-\tfrac{b}{q_{2}\cdots q_{k}}|\lesssim
  QN^{-1+\epsilon}\prod_{j=2}^{k}q_{j}^{-\frac{k}{j}},
  \,\,1\le b\le q_{2}\cdots q_{k}\bigg\}
\end{equation}
with $q_{j}$ for $2\le j\le k$ satisfying the conditions
in Proposition \ref{refinement-structure}.

It is easy to see that
\begin{equation}\label{M-measure}
  |\mathcal{M}_{q_{2},\cdots, q_{k}}|
  \lesssim (q_{2}\cdots q_{k})\cdot QN^{-1+\epsilon}\prod_{j=2}^{k}q_{j}^{-\frac{k}{j}},
\end{equation}
and
\begin{equation}
  M_{k}(A;N) \subset \bigcup_{(q_{2},\cdots,q_{k})\in\Omega}\mathcal{M}_{q_{2},\cdots, q_{k}},
\end{equation}
where
\begin{equation}
  \Omega = \bigg\{(q_{2},\cdots,q_{k})\in\mathbb{N}_{+}^{k-1}: q_{j}\in\mathcal{F}_{j}, 3\le j\le k,
  \prod_{j=2}^{k}q_{j}^{\frac{1}{j}}\lesssim Q^{\frac{1}{k}}N^{\eta}\bigg\}
\end{equation}
with $\eta>0$ small.

By pigeonholing, we have
\begin{equation}\label{pigeonholing}
\begin{split}
  |M_{k}(A;N)| \lesssim& \max\bigg\{\vphantom{\prod_{j=2}^{k}}U(Q_{2},\cdots,Q_{k}): \text{dyadic}\,\,Q_{2},\cdots,Q_{k}\ge1,\\
  &\phantom{\max\{U(Q_{2},\cdots,}\prod_{j=2}^{k}Q_{j}^{\frac{1}{j}}\le Q^{\frac{1}{k}}N^{\eta}\bigg\}(\log N)^{k},
\end{split}
\end{equation}
where
\begin{equation}\label{U-definition}
  U(Q_{2},\cdots,Q_{k}) := \bigg|\bigcup_{\substack{q_{j\sim Q_{j},2\le j\le k}
  \\(q_{2},\cdots,q_{k})\in\Omega}}\mathcal{M}_{q_{2},\cdots,q_{k}}\bigg|.
\end{equation}
From \eqref{M-measure}, \eqref{U-definition} and Lemma \ref{F-number}, we have
\begin{equation}\label{U-estimate}
\begin{split}
  U(Q_{2},\cdots,Q_{k})
  &\lesssim \sum_{\substack{q_{j\sim Q_{j},2\le j\le k}
  \\(q_{2},\cdots,q_{k})\in\Omega}}|\mathcal{M}_{q_{2},\cdots,q_{k}}|\\
  &\lesssim \sum_{\substack{q_{j\sim Q_{j},2\le j\le k}
  \\(q_{2},\cdots,q_{k})\in\Omega}}(q_{2}\cdots q_{k})\cdot
  QN^{-1+\epsilon}\prod_{j=2}^{k}q_{j}^{-\frac{k}{j}}\\
  &\sim \Big(\sum_{q_{2}\sim Q_{2}}q_{2}^{1-\frac{k}{2}}\Big)QN^{-1+\epsilon}\prod_{j=3}^{k}
  \sum_{\substack{q_{j}\sim Q_{j}\\q_{j}\in\mathcal{F}_{j}}}q_{j}^{1-\frac{k}{j}}\\
  &\lesssim QN^{-1+\epsilon}\prod_{j=2}^{k}Q_{j}^{\beta_{j}},
\end{split}
\end{equation}
where
\begin{equation*}
  \beta_{j}=
  \begin{cases}
    1-\frac{k}{j}+1,\quad &\text{if}\,\,j=2,\\
    1-\frac{k}{j}+\frac{1}{j},\quad &\text{if}\,\,3\le j\le k.
  \end{cases}
\end{equation*}
Since $k\ge3$, we can see that $\beta_{j}\le\tfrac{1}{j}$ for $2\le j\le k$.
Thus we have
\begin{equation}\label{Q-j-product}
  \prod_{j=2}^{k}Q_{j}^{\beta_{j}} \le \prod_{j=2}^{k}Q_{j}^{\frac{1}{j}} \lesssim
  Q^{\frac{1}{k}}N^{\eta},
\end{equation}
where the last inequality comes from the definition of $Q_{j}$ in \eqref{pigeonholing}.

Combining \eqref{pigeonholing}, \eqref{U-estimate} with \eqref{Q-j-product}, we can obtain
\begin{equation}
\begin{split}
  |M_{k}(A;N)|
  &\lesssim QN^{-1+\epsilon}\cdot Q^{\frac{1}{k}}N^{\eta}\cdot (\log N)^{k}\\
  &\sim (NA^{-1})^{k}\cdot N^{-1+\epsilon}\cdot (NA^{-1})\cdot N^{\eta}\cdot N^{\epsilon}\\
  &\sim (NA^{-1})^{k+1}\cdot N^{-1+\epsilon}\\
  &\sim N^{k+\epsilon}A^{-(k+1)}.
\end{split}
\end{equation}

\end{proof}

Now we will prove Theorem \ref{refinement-max-estimate}.
\begin{proof}[\bf Proof of Theorem \ref{refinement-max-estimate}]
For $D=\min\{2^{k-1},k(k-1)\}$ with $k\ge3$, then
\begin{equation}
 D =
  \begin{cases}
    2^{k-1},\quad \text{if}\,\,3\le k\le5\\
    k(k-1),\quad \text{if}\,\,k\ge6.
  \end{cases}
\end{equation}
For the calculation of the maximal estimate for Weyl sums \eqref{max-estimate-2},
we only consider 
 the cases $3\le k\le5$, since
the calculation for the cases $k\ge6$ is similar.

Set
\begin{equation}
  a=k,\quad b=k+1,\quad M=N^{1-\frac{1}{2^{k-1}}+\epsilon},
\end{equation}
where $\epsilon$ is taken small enough such that $M<N$.

Applying Proposition \ref{refinement-structure}, Lemma \ref{refinement-level-set} and Lemma \ref{level-max} to
\begin{equation}
  I_{p} = \int_{\T}\sup_{0<t<1}|\omega_{N,k}(x,t)|^{p}\,\mathrm{d}x,
\end{equation}
we obtain
\begin{equation}
  I_{p} \lesssim M^{p} + N^{a}M^{p-b}\log N + N^{p+a-b}.
\end{equation}
\begin{enumerate}
  \item[\bf$1^{\circ}$.] For $1\le p\le b$, we have $N^{a}M^{p-b}\ge N^{p+a-b}$.
  Since for $k\ge3$, there holds
  \begin{equation*}
    1\ge\frac{k+1}{2^{k-1}},
  \end{equation*}
  then $M^{p}\ge N^{a}M^{p-b}$, which yields that
  \begin{equation}
    I_{p} \lesssim M^{p} \sim N^{(1-\frac{1}{2^{k-1}})p + \epsilon}.
  \end{equation}

  \item[\bf$2^{\circ}$.] For $p>b$, we have $N^{a}M^{p-b}<N^{p+a-b}$.
  \begin{enumerate}
    \item For $b<p\le2^{k-1}$, there holds $M^{p}\ge N^{p+a-b}$ which yields that
    \begin{equation}
    I_{p}\lesssim M^{p} \sim N^{(1-\frac{1}{2^{k-1}})p + \epsilon}.
    \end{equation}

    \item For $p>2^{k-1}$, there holds $M^{p}< N^{p+a-b}$ which yields that
    \begin{equation}
    I_{p}\lesssim N^{p+a-b} \sim N^{p-1+\epsilon}.
    \end{equation}
  \end{enumerate}

\end{enumerate}

Hence, we complete the proof of Theorem \ref{refinement-max-estimate}.

\end{proof}

\section{Negative result}\label{negative-result}

We first give a trivial lower bound for the $L^{p}$ maximal estimates of $\omega_{N,k}(x,t)$,
which gives the sharp negative results in Theorem \ref{negative-theorem} for large $p$.

For $\forall\,\,p>1$, taking $E=[0,10^{-6}N^{-1}]$, by 
the constructive interference for $\omega_{N,k}(x,t)$, we have
\begin{align*}
  &\bigg(\int_{\T}\sup_{0<t<1}|\omega_{N,k}(x,t)|^{p}\,\mathrm{d}x\bigg)^{\frac{1}{p}}\\
  \ge&\bigg(\int_{E}\sup_{0<t<10^{-6}N^{-k}}|\omega_{N}(x,t)|^{p}\,\mathrm{d}x\bigg)^{\frac{1}{p}}\\
  \gtrsim &N\cdot |E|^{\frac{1}{p}} \gtrsim N\cdot N^{-\frac{1}{p}} = N^{1 - \frac{1}{p}}.
\end{align*}

\begin{remark}
  From the estimate above and Theorem \ref{refinement-max-estimate}, we can see that when $p\ge\min\{k(k-1), 2^{k-1}\}$ with $k\ge3$, 
  precisely when
  \begin{equation*}
    p \ge \begin{cases}
      2^{k-1}, &\quad\textup{for}\,\,3\le k\le 5,\\
      k(k-1),&\quad\textup{for}\,\,k\ge6,
    \end{cases}
  \end{equation*}
  the parameter $s_{p,k}$ for the maximal estimate \eqref{max-estimate-2} in Theorem \ref{refinement-max-estimate} is almost sharp
  up to the $\epsilon$ loss.
\end{remark}

The simple example above gives the rough lower bound for the maximal estimates of Weyl sums, but is not good enough for this lower bound estimate for small $p$. Thus,
in order to give an example to obtain a better lower bound for \eqref{lower-max-estimate} in Theorem \ref{negative-theorem} for small $p$,
we construct sets whose Lebesgue measure is greater than $O(1)$ and on which $\omega_{N,k}$ has nontrivial lower bound. For this purpose, we give some necessary tools.

First, we recall the Gauss sums.
For $a, q\in\N$ and $b\in\Z$, define
\begin{equation}\label{k-dispersive}
  S_{k}(a,b,q) := \sum_{n=1}^{q}e^{2\pi i(\frac{a}{q}n^{k}+\frac{b}{q}n)}.
\end{equation}
It is easy to see that
\begin{equation*}
  S_{k}(a,b,q) = S_{k}(\tfrac{\bm{u}}{q}; q)
\end{equation*}
with $\bm{u}=(b, 0,\cdots, 0, a)$ which is $k$-dimensional.

\begin{lemma}[\cite{Oh},\cite{Pierce}]\label{Gauss-sum}
Let $a, q\in\N$ and $b\in\Z$ with $(a,q)=1$. Then, the following holds for the Gauss sums:
\begin{enumerate}
  \item[(1)] When $b$ is even,
  \begin{equation}
    |S_{2}(a,b,q)|=
    \begin{cases}
      \sqrt{q},\quad&\text{if}\,\,q\,\,\text{is odd},\\
      0,\quad&\text{if}\,\,q\equiv2 \,(\text{mod}\,\, 4),\\
      \sqrt{2q},\quad&\text{if}\,\,q\equiv0 \,(\text{mod}\,\, 4).
    \end{cases}
  \end{equation}

  \item[(2)] When $b$ is odd,
  \begin{equation}
    |S_{2}(a,b,q)|=
    \begin{cases}
      \sqrt{q},\quad&\text{if}\,\,q\,\,\text{is odd},\\
      \sqrt{2q},\quad&\text{if}\,\,q\equiv2 \,(\text{mod}\,\, 4),\\
      0,\quad&\text{if}\,\,q\equiv0 \,(\text{mod}\,\, 4).
    \end{cases}
  \end{equation}

\end{enumerate}

\end{lemma}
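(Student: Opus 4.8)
The statement is the classical evaluation of the quadratic Gauss sum, and the plan is to reduce the general sum $S_2(a,b,q)=\sum_{n\bmod q}e\big((an^2+bn)/q\big)$ to \emph{pure} Gauss sums by splitting the modulus into its odd and $2$-power parts and completing the square wherever the linear coefficient permits. Throughout I would write $g(\alpha,\beta;c):=\sum_{n\bmod c}e\big((\alpha n^2+\beta n)/c\big)$, so $S_2(a,b,q)=g(a,b;q)$, and keep in mind the standing hypothesis $(a,q)=1$, which makes $a$ a unit and underlies every completion of the square below.

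The first step is a multiplicativity reduction. Factoring $q=2^{e}m$ with $m$ odd and splitting $n$ by the Chinese Remainder Theorem, the cross term contributes an integer to the phase and drops out, leaving the factorization
\begin{equation*}
  g(a,b;q)=g(am,b;2^{e})\,g(a2^{e},b;m).
\end{equation*}
For the odd factor, $(a2^{e},m)=1$ and $2$ is invertible modulo $m$, so completing the square through a shift $n\mapsto n+\overline{2a2^{e}}\,b$ turns $g(a2^{e},b;m)$ into a unimodular phase times $\sum_{n\bmod m}e(a2^{e}n^{2}/m)$, whose modulus is the classical $\sqrt m$. In particular the odd factor always has modulus $\sqrt m$, independent of the parity of $b$; taking $e=0$ this already settles the case $q$ odd in both branches.

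The heart of the matter is the dyadic factor $g(\alpha,b;2^{e})$ with $\alpha:=am$ odd, and this is where the dependence on $b\bmod 2$ and on $q\bmod 4$ enters. I would first extract the vanishing by the translation $n\mapsto n+2^{e-1}$: expanding $\alpha(n+2^{e-1})^{2}+b(n+2^{e-1})$ and reducing modulo $2^{e}$ leaves the overall phase $e\big(\alpha 2^{e-2}+b/2\big)$. Since the sum is invariant under the translation, it must vanish whenever this phase is $-1$. For $e\ge 2$ (that is, $q\equiv 0\bmod 4$) the phase is $(-1)^{b}$, forcing $g=0$ when $b$ is odd; for $e=1$ (that is, $q\equiv 2\bmod 4$) it is $(-1)^{\alpha+b}=(-1)^{1+b}$, forcing $g=0$ when $b$ is even. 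This reproduces exactly the two zero cases of the lemma.

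In the surviving cases the coefficients are compatibly even and one completes the square a second time: for $e\ge 2$ and $b=2b'$, the shift $n\mapsto n+\overline{\alpha}\,b'$ reduces $g(\alpha,b;2^{e})$ to a phase times the pure dyadic sum $g(\alpha,0;2^{e})$, whose modulus is $\sqrt{2\cdot 2^{e}}$ by a short computation (or the standard $2$-adic Gauss-sum recursion), while the case $e=1$, $b$ odd is the elementary $g(\alpha,b;2)=1+e((\alpha+b)/2)=2$. Multiplying the dyadic and odd factors then yields $|S_2(a,b,q)|=\sqrt{2\cdot 2^{e}}\,\sqrt m=\sqrt{2q}$ in the nonvanishing even cases and $\sqrt q$ in the odd case, as claimed. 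I expect the only genuine obstacle to be the $2$-adic bookkeeping: tracking \emph{which} parity of $b$ annihilates the sum as $q$ moves from $q\equiv 2$ to $q\equiv 0\bmod 4$, and pinning down the value $\sqrt{2\cdot 2^{e}}$ of the pure dyadic Gauss sum; by contrast the odd part and the multiplicativity step are routine once the square is completed.
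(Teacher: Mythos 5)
Your proof is correct, but there is nothing in the paper to compare it against: the paper does not prove this lemma, it simply quotes it as a known evaluation of quadratic Gauss sums with citations to Oh's note and Pierce's paper. Your argument is the standard self-contained derivation, and every step checks out. The CRT factorization $g(a,b;q)=g(am,b;2^{e})\,g(a2^{e},b;m)$ is valid because the cross term $2an_{1}n_{2}$ in the expanded phase is an integer; completing the square modulo the odd part $m$ (where $2$ is invertible) gives modulus $\sqrt{m}$ regardless of the parity of $b$, settling the odd-$q$ case. The key dyadic step is also right: the substitution $n\mapsto n+2^{e-1}$ multiplies $g(\alpha,b;2^{e})$ by $e\big(\alpha 2^{e-2}+b/2\big)$, which equals $(-1)^{b}$ for $e\ge 2$ and $(-1)^{1+b}$ for $e=1$ (using $\alpha=am$ odd), so the sum vanishes exactly when $q\equiv 0\ (\mathrm{mod}\ 4)$ with $b$ odd, or $q\equiv 2\ (\mathrm{mod}\ 4)$ with $b$ even, matching the two zero entries of the lemma. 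In the surviving cases, your reduction to the pure dyadic sum via $n\mapsto n+\overline{\alpha}b'$ and the classical value $|g(\alpha,0;2^{e})|=\sqrt{2^{e+1}}$ for $e\ge 2$, together with the direct computation $g(\alpha,b;2)=1+(-1)^{\alpha+b}=2$ when $b$ is odd, give $|S_{2}(a,b,q)|=\sqrt{2\cdot 2^{e}}\cdot\sqrt{m}=\sqrt{2q}$, as claimed. What your approach buys over the paper's is self-containedness: the lemma becomes verifiable without consulting the cited references, at the cost of roughly a page of classical bookkeeping that the authors chose to outsource.
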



For the case $k=2$ in \eqref{k-dispersive}, 
which is the Schr\"odinger case, using the Gauss sums estimate which is Lemma \ref{Gauss-sum}, Oh constructed a nontrivial example for the $L^{p}$ maximal estimate for $\omega_{N,2}(x,t)$, which gives the lower bound for \eqref{max-estimate-2} with $k=2$. See \cite{Oh} for the detail.


 Now to construct the nontrivial counterexample for Theorem \ref{negative-theorem}, Lemma \ref{Gauss-sum} is not enough to give the counterexample for the higher order cases $k\ge3$, and we give the generalization of Lemma \ref{Gauss-sum} to the high-degree cases,
which can be found in \cite{ACP}. And we will see that the constructing of the counterexample for high-degree cases $k\ge3$ needs $q$ to be of prime, which is different from the case $k=2$.


We now show that a positive proportion of choices for integral coefficients  lead to a complete exponential sum modulo $q$ of size $\gg q^{1/2}$.

\begin{prop}[An, Chu and Pierce \cite{ACP}, Proposition 2.2]\label{prop_sum_big}
 Fix integers $k\ge2$.
For each integer $q$ and tuple $(a_1, a_2)$ for $1\le a_{j}\le q$, $1\le j\le 2$,
there exist constants $0<\al_1<1$ and $0<\al_2<1$ with $\al_2$ depending on $k$, such that for every prime $q \geq 3$ with $q \ndiv k$, at least $\al_2 q^2$ choices of $(a_1,a_2)$ have $|S_{k}(a_1,a_2;q)| \geq \al_1 q^{1/2}$. In fact, one can take $\al_1=1/2$ and $\al_2 = k^{-2}/4$.
\end{prop}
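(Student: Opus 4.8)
The plan is to run a second-and-fourth-moment argument over the whole family of $q^{2}$ pairs $(a_1,a_2)$, exploiting that the mean of $|S_k(a_1,a_2;q)|^2$ is exactly $q$ while the fourth moment is only a bounded multiple of $q^4$. Writing $e(x)=e^{2\pi ix}$, I would first expand $|S_k|^2$ and sum over the complete residue system $a_1,a_2\in\{1,\dots,q\}$; by orthogonality the sum collapses to the diagonal $m=n$ and gives
\begin{equation*}
\sum_{a_1=1}^{q}\sum_{a_2=1}^{q}|S_k(a_1,a_2;q)|^2=q^2\cdot\#\{(m,n)\in\{1,\dots,q\}^2:\ m\equiv n \ (\mathrm{mod}\ q)\}=q^3 .
\end{equation*}
Expanding $|S_k|^4$ and summing in the same way reduces the fourth moment to $\sum_{a_1,a_2}|S_k|^4=q^2 J$, where $J$ counts the quadruples $(n_1,n_2,n_3,n_4)\in\mathbb{F}_q^4$ satisfying the simultaneous congruences $n_1+n_3\equiv n_2+n_4$ and $n_1^k+n_3^k\equiv n_2^k+n_4^k$ modulo $q$.

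The heart of the matter, and the step I expect to be the main obstacle, is the upper bound $J\le kq^2$. Here I would fix $(n_1,n_3)$, write $s=n_1+n_3$ and $P=n_1^k+n_3^k$, substitute $n_4=s-n_2$, and count the admissible $n_2$ as the roots in $\mathbb{F}_q$ of $h(x)=x^k+(s-x)^k-P$. Since $q$ is prime, $\mathbb{F}_q$ is a field and a nonzero polynomial of degree $d$ has at most $d$ roots, so everything comes down to the leading behaviour of $x^k+(s-x)^k$, whose top coefficient is $1+(-1)^k$. If $k$ is even this equals $2$, nonzero because $q\ge 3$ is odd, so $h$ has degree $k$ and at most $k$ roots, giving $J\le kq^2$ immediately. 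If $k$ is odd the $x^k$ terms cancel and the next coefficient is $ks$, which is nonzero whenever $s\ne 0$ precisely because $q$ is prime and $q\nmid k$; then $\deg h=k-1$ and there are at most $k-1$ roots. The single degenerate case $s=0$ forces $n_3=-n_1$, hence $P=0$ and $h\equiv 0$, contributing $q$ choices of $n_2$ for each of the $q$ such pairs, i.e. $q^2$ quadruples; altogether $J\le q^2+(q^2-q)(k-1)\le kq^2$. This is exactly where all three hypotheses ($q$ prime, $q$ odd, $q\nmid k$) enter.

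Finally I would pass from moments to the count of large sums. Set $\mathcal L=\{(a_1,a_2):|S_k(a_1,a_2;q)|\ge\alpha_1 q^{1/2}\}$ and $N=\#\mathcal L$. On the complement $|S_k|^2<\alpha_1^2 q$ holds for at most $q^2$ pairs, so it contributes less than $\alpha_1^2 q^3$ to the second moment; hence the first display yields $\sum_{\mathcal L}|S_k|^2>(1-\alpha_1^2)q^3$. Cauchy--Schwarz together with the fourth-moment bound then gives
\begin{equation*}
(1-\alpha_1^2)q^3<\Big(\sum_{\mathcal L}1\Big)^{1/2}\Big(\sum_{a_1,a_2}|S_k|^4\Big)^{1/2}\le N^{1/2}(kq^4)^{1/2},
\end{equation*}
whence $N>(1-\alpha_1^2)^2 q^2/k$. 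Choosing $\alpha_1=\tfrac12$ gives $N>\tfrac{9}{16k}q^2$, which exceeds $\tfrac14 k^{-2}q^2$ for every $k\ge1$; this establishes the proposition with $\alpha_1=1/2$ and $\alpha_2=k^{-2}/4$ (and in fact with the better constant $\tfrac{9}{16k}$).
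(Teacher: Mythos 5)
Your proof is correct, but it takes a genuinely different route from the argument this proposition rests on. The paper itself gives no proof: it imports the statement verbatim from An--Chu--Pierce \cite{ACP}, and the proof there combines the exact second-moment identity $\sum_{a_1,a_2=1}^{q}|S_k(a_1,a_2;q)|^2=q^3$ with the Weil bound $|S_k(a_1,a_2;q)|\le (k-1)q^{1/2}$ (valid for prime $q$ with $q\nmid k$ and leading coefficient not divisible by $q$), applied pointwise to the pairs in the large set; after removing the degenerate pair $(q,q)$, this forces the number $N$ of large pairs to satisfy $N(k-1)^2q\ge (1-\al_1^2)q^3-q^2$, i.e. $N\gtrsim q^2/k^2$, which is exactly where the constant $\al_2=k^{-2}/4$ comes from. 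You replace the Weil bound --- the only non-elementary ingredient --- by a fourth-moment bound plus a Paley--Zygmund/Cauchy--Schwarz step, and your key computation is sound: orthogonality over the complete residue system reduces the fourth moment to $q^2J$, where $J$ counts quadruples with $n_1+n_3\equiv n_2+n_4$ and $n_1^k+n_3^k\equiv n_2^k+n_4^k \pmod q$, and your analysis of $h(x)=x^k+(s-x)^k-P$ over $\mathbb{F}_q$ is exactly right: for even $k$ the leading coefficient is $2\not\equiv 0$ since $q\ge 3$ is odd; for odd $k$ and $s\not\equiv 0$ the leading coefficient is $ks\not\equiv 0$ since $q$ is prime and $q\nmid k$; and in the degenerate case $s\equiv 0$ (odd $k$) one indeed gets $P\equiv 0$ and $h\equiv 0$, costing only $q\cdot q=q^2$ quadruples, so $J\le kq^2$, with each hypothesis used precisely where it is needed. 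The concluding bound $N\ge(1-\al_1^2)^2q^2/k$ is routine, and with $\al_1=1/2$ it dominates $k^{-2}q^2/4$ for all $k\ge 1$. As for what each approach buys: the Weil-based proof is shorter but rests on deep algebraic-geometric input, whereas yours is self-contained and elementary (nothing beyond root-counting for polynomials over a field), and it even yields the stronger proportion $\al_2=9/(16k)$, improving the $k$-dependence of the constant from $k^{-2}$ to $k^{-1}$.
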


In order to apply Proposition \ref{prop_sum_big} to estimate the lower bound of $L^{p}$ norm for $\sup_{0<t<1}|\omega_{N,k}(x,t)|$, we need the following corollary, which distinguishes the role of the highest-order coefficient.
\begin{cor}\label{cor_T_big}
Fix integer $k\ge2$.
 Specify $\al_1,\al_2$ to be as in  Proposition \ref{prop_sum_big}. For each prime $q$,
let $\Acal(q)$ denote the set of $(a_1,a_2)$ such that $1\le a_1, a_2\le q$ and $|S_{k}(a_1,a_2;q)| \geq \al_1q^{1/2}.$
For each $a_1,$ define the ``good set''
\[G(a_1) := \{ a_{2}\in[1,q]: (a_1,a_2) \in \Acal(q)\}.\]
Suppose that $q\ge 3$ is a prime such that $q \ndiv k$.
Then for at least one choice of $a_1(q)$ depending on $q$,  we have $\#G(a_1(q)) \geq (\al_2/2) q.$

\end{cor}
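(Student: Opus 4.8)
The plan is to deduce Corollary~\ref{cor_T_big} from Proposition~\ref{prop_sum_big} by a simple averaging (pigeonhole) argument over the first coordinate $a_1$. Proposition~\ref{prop_sum_big} guarantees that the set $\Acal(q)$ of pairs $(a_1,a_2)\in[1,q]^2$ with $|S_k(a_1,a_2;q)|\ge\al_1 q^{1/2}$ satisfies $\#\Acal(q)\ge\al_2 q^2$. The key observation is that
\begin{equation*}
  \#\Acal(q) = \sum_{a_1=1}^{q}\#G(a_1),
\end{equation*}
so the average value of $\#G(a_1)$ over the $q$ choices of $a_1$ is at least $\al_2 q$. By pigeonholing, there must be at least one value $a_1=a_1(q)$ for which $\#G(a_1(q))$ is at least this average, giving $\#G(a_1(q))\ge\al_2 q\ge(\al_2/2)q$.

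More carefully, I would run the averaging argument so as to keep control even if one wished to assert the conclusion for a positive proportion of $a_1$ rather than just a single one. Suppose for contradiction that $\#G(a_1)<(\al_2/2)q$ for all but a negligible set of $a_1$; then summing over $a_1$ would force $\#\Acal(q)$ to be strictly smaller than $\al_2 q^2$, contradicting Proposition~\ref{prop_sum_big}. Concretely, if $\#G(a_1)<(\al_2/2)q$ held for \emph{every} $a_1\in[1,q]$, then $\#\Acal(q)=\sum_{a_1}\#G(a_1)<(\al_2/2)q\cdot q=(\al_2/2)q^2<\al_2 q^2$, a contradiction. Hence at least one $a_1(q)$ achieves $\#G(a_1(q))\ge(\al_2/2)q$, which is precisely the claim. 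The gap between the threshold $(\al_2/2)q$ in the corollary and the average $\al_2 q$ is exactly what makes the contradiction strict and the argument robust; the factor $1/2$ provides room and is not essential to the mechanism.

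The only hypotheses that need to be tracked are that $q\ge3$ is prime with $q\nmid k$, which are imposed solely to invoke Proposition~\ref{prop_sum_big}; no further arithmetic input is required, since the step from the proposition to the corollary is purely combinatorial. I expect there to be \emph{no serious obstacle} here: this is a routine counting argument, and the main point is simply to record the passage from the two-dimensional density statement about $\Acal(q)$ to the one-dimensional slice statement about a single good fiber $G(a_1(q))$. The reason this reformulation is worth isolating is that, in the subsequent construction of the counterexample for Theorem~\ref{negative-theorem}, one wants to fix the highest-order coefficient (equivalently, the value of $a_1$ controlling the linear frequency) and then vary the remaining coefficient $a_2$ over a set of positive proportion $\gg q$; Corollary~\ref{cor_T_big} packages exactly this into a usable form, ensuring that for a single well-chosen $a_1(q)$ there remain $\gg q$ admissible values of $a_2$ yielding large Gauss sums.
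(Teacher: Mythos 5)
Your proposal is correct and follows essentially the same route as the paper: the paper also counts $\#\Acal(q)$ by summing $\#G(a_1)$ over $a_1$ and derives a contradiction from assuming $\#G(a_1)<(\al_2/2)q$ for every $a_1$. Your additional observation that the averaging argument actually yields the stronger bound $\#G(a_1(q))\ge\al_2 q$ is valid, confirming the factor $1/2$ is just slack.
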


\begin{proof}
  We prove this corollary by contradiction.
  Assume for any $a_{1}\in[1,q]$, there holds $\#G(a_1) < (\al_2/2) q$.

  Then we see that
  \begin{equation*}
    \#\mathcal{A}(q) \le \sum_{a_{1}=1}^{q}\#G(a_{1}) < (\al_2/2) q^{2}
  \end{equation*}
  which contradicts with the result of Proposition \ref{prop_sum_big}, $\#\mathcal{A}(q)\ge \al_2 q^{2}$. This fact implies the result in this corollary.
\end{proof}


Finally we give the proof of Theorem \ref{negative-theorem}.

\begin{proof}[\bf Proof of Theorem \ref{negative-theorem}]
Let $k\ge3$.
For $a,\,b,\,q\in\N$, such that $q\ge 3$ is prime, $q\ndiv k$, and $1\le a<q$, define
\begin{equation}\label{J-q-a}
  J(q,a) = [\tfrac{a}{q}-\tfrac{1}{100N}, \tfrac{a}{q}+\tfrac{1}{100N}]
\end{equation}
and
\begin{equation}\label{major-arc}
  \mathcal{M}_{k}(q,a,b) = \{(x,t)\in\T^{2}: |x-\tfrac{a}{q}|\le \tfrac{1}{100N},
  |t-\tfrac{b}{q}|\le \tfrac{1}{100N^{k}}\}.
\end{equation}

Since $q$ is a prime, we have $\gcd(q,a,b)=1$ with $1\le a,b<q$.
Let
$$\bm{\beta} := (\beta_1, \beta_2) = \left(x-\frac{a}{q}, t-\frac{b}{q}\right)$$
for $(x,t)\in\mathcal{M}_{k}(q,a,b)$.
Then by Lemma \ref{decom-S-d}, we have
\begin{equation}\label{decom-omega}
  \omega_{N,k}(x,t) = q^{-1}S_{k}(b,a,q)I(\beta_1,\beta_2) + \Delta
\end{equation}
where
\begin{equation}\label{decom-error}
  |\Delta|\lesssim q(1+|\beta_{1}|N+|\beta_2|N^{k}) \lesssim q,
\end{equation}
and
\begin{equation}\label{decom-I}
  |I(\beta_1,\beta_2)|
  = \bigg|\int_{0}^{N}e^{2\pi i(\beta_{1}z + \beta_{2}z^k)}\,\mathrm{d}z\bigg|
  \sim N.
\end{equation}

By Proposition \ref{prop_sum_big},
there exist at least $\frac{1}{4k^{2}}q^{2}$ choices of $(b,a)$ with
$1\le a,b<q$, such that
\begin{equation}\label{lower-Gauss-sum}
  |S_{k}(b,a,q)|  \gtrsim \sqrt{q}.
\end{equation}
Then by Corollary \ref{cor_T_big}, we have at least one choice of $b(q)\in \Z\cup[1,q)$, such that
\begin{equation}\label{number-a}
  \# G(b(q))\ge \tfrac{1}{8k^{2}}q.
\end{equation}
This fact means that for a fixed $b(q)$, there exist at least $\tfrac{1}{8k^{2}}q$ choices of $a\in\N\cup[1,q)$
satisfying \eqref{lower-Gauss-sum}.

Taking prime number $q\in[c_{1}\sqrt{N}, \sqrt{N}]$ 
with $c_{1}$ small,
it is easy to see that $J(q_{1},a_{1})$ and
$J(q_{2},a_{2})$ are disjoint if $\frac{a_{1}}{q_{1}}\neq \frac{a_{2}}{q_{2}}$ for $q_1, q_2 \in[c_{1}\sqrt{N}, \sqrt{N}]$. By \eqref{decom-omega} -
\eqref{lower-Gauss-sum}, for $(x,t)\in\mathcal{M}_{k}(q,a,b)$, we have
\begin{equation}
  |\omega_{N,k}(x,t)|\gtrsim q^{-1}|S_{k}(b,a,q)|\cdot|I(\beta_{1},\beta_2)|
  \gtrsim \tfrac{N}{\sqrt{q}} \sim N^{\frac{3}{4}}.
\end{equation}

{\bf Case 1, $1\le p\le4$.}

\begin{equation}
\begin{split}
  \bigg(\int_{\T}\sup_{0<t<1}|\omega_{N,k}|^{p}\,\mathrm{d}x\bigg)^{\frac{1}{p}}
  \ge& \int_{\T}\sup_{0<t<1}|\omega_{N,k}|\,\mathrm{d}x\\
  \ge& \sum_{\substack{c_{1}\sqrt{N}\le q\le\sqrt{N},\\ q\,\,\text{prime}}}\sum_{a\in G(b(q))}
  \int_{J(q,a)}\sup_{0<t<1}|\omega_{N,k}|\,\mathrm{d}x\\
  \gtrsim& \sum_{\substack{c_{1}\sqrt{N}\le q\le\sqrt{N},\\ q\,\,\text{prime}}}
  \sqrt{N}\cdot N^{\frac{3}{4}}N^{-1} \\
  \gtrsim& \pi(\sqrt{N})\sqrt{N}\cdot N^{\frac{3}{4}}N^{-1}\\
  \gtrsim& N^{\frac{3}{4}},
\end{split}
\end{equation}
where $b(q)$ is chosen to be one fixed number such that \eqref{number-a} holds.

{\bf Case 2, $p> 4$.}

In this case, we can see that
\begin{equation}
  \begin{split}
  \bigg(\int_{\T}\sup_{0<t<1}|\omega_{N,k}|^{p}\,\mathrm{d}x\bigg)^{\frac{1}{p}}
  \ge& \bigg(\int_{[0,10^{-6}N^{-1}]}\sup_{t\in[0,10^{-6}N^{-k}]}
       |\omega_{N,k}|^{p}\,\mathrm{d}x\bigg)^{\frac{1}{p}}\\
  \gtrsim& N^{1-\frac{1}{p}} \ge N^{\frac{3}{4}}.
  \end{split}
\end{equation}

The proof is completed.
\end{proof}

{\bf Acknowledgements.}
This project was suppported by the National Key R\&D program of China: No.2022YFA1005700 and the NSF of China under grant No.12371095.


\appendix
\section{a potential improvement of Theorem \ref{refinement-max-estimate} assuming the Weyl sums estimate}\label{appendix-a}
In this appendix, we will sketch how a better estimate of the Weyl sum (Conjecture \ref{weyl-sum-conj}) can give a more precisely characterization of the set where the Weyl sums obtain significant values. The proof is mainly from Baker \cite[Chapter 4]{Baker1986} and one can also refer to Baker \cite{Baker1982,Baker2016}.
\begin{lemma}[Dirichlet]\label{lem-app-1} Let $\alpha$ be a real number. Then for any $M\geq 1$, there exists a rational number $a/q$ with $\gcd(a,q)=1$ and $1\leq q\leq M$ and
\begin{equ*}
    \Big|\alpha-\frac{a}{q}\Big|\leq\frac{1}{qM}.
\end{equ*}
\end{lemma}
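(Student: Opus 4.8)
The plan is to prove this by the Dirichlet box principle (pigeonhole), which is the classical route and the source of the theorem's name. The only thing requiring genuine care is recovering the exact shape of the conclusion (a real parameter $M$ rather than an integer, the sharp constant $1/(qM)$, and the coprimality $\gcd(a,q)=1$), so I would keep the bookkeeping explicit rather than invoke an off-the-shelf integer version.

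First I would set $Q=\lfloor M\rfloor$, so that $Q\geq 1$ and $Q\leq M<Q+1$, and consider the $Q+1$ fractional parts $\{j\alpha\}$ for $j=0,1,\ldots,Q$, all lying in $[0,1)$. Here $\{y\}$ denotes the fractional part. Next I would partition $[0,1)$ into the $Q+1$ half-open boxes $\bigl[i/(Q+1),\,(i+1)/(Q+1)\bigr)$, $i=0,\ldots,Q$. If two of the points $\{j_1\alpha\},\{j_2\alpha\}$ with $j_1<j_2$ fall in the same box, then $|\{j_2\alpha\}-\{j_1\alpha\}|<1/(Q+1)$, and setting $q=j_2-j_1\in[1,Q]$ and $a=\lfloor j_2\alpha\rfloor-\lfloor j_1\alpha\rfloor$ gives $|q\alpha-a|<1/(Q+1)$. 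The step I expect to be the one small subtlety is the wrap-around case in which every box contains exactly one of the $Q+1$ points: then, since $\{0\cdot\alpha\}=0$ sits in the leftmost box, the rightmost box $\bigl[Q/(Q+1),1\bigr)$ must contain some $\{j_0\alpha\}$ with $j_0\geq 1$, whence $1-\{j_0\alpha\}\leq 1/(Q+1)$ and I can take $q=j_0$, $a=\lfloor j_0\alpha\rfloor+1$ to again obtain $|q\alpha-a|\leq 1/(Q+1)$ with $1\leq q\leq Q$.

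In either case I arrive at integers $1\leq q\leq Q\leq M$ and $a$ with $|q\alpha-a|\leq 1/(Q+1)$, and then the strict inequality $M<Q+1$ upgrades this to $|q\alpha-a|<1/M$, that is $\bigl|\alpha-\tfrac{a}{q}\bigr|<\tfrac{1}{qM}$. The remaining step is to enforce $\gcd(a,q)=1$: writing $d=\gcd(a,q)$, $a=da'$, $q=dq'$, the reduced fraction $a'/q'$ satisfies $a'/q'=a/q$, $1\leq q'\leq q\leq M$, and $\bigl|\alpha-\tfrac{a'}{q'}\bigr|=\bigl|\alpha-\tfrac{a}{q}\bigr|<\tfrac{1}{qM}\leq\tfrac{1}{q'M}$, so the required bound survives the reduction. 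This completes the argument; there is no deep obstacle here, only the care needed to pin down the sharp constant and the coprimality simultaneously for non-integer $M$.
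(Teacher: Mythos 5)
Your proof is correct. Note that the paper itself offers no proof of this lemma: it is quoted as the classical Dirichlet approximation theorem, used as a black box in the appendix (the reader is implicitly referred to standard references such as Baker's monograph). Your pigeonhole argument is exactly the standard proof of the sharp form of the theorem, and you handle the two points that are usually glossed over: the wrap-around case in which all $Q+1$ boxes are occupied (needed to get the constant $1/(Q+1)$ rather than $1/Q$, which is what allows a real parameter $M$ with $Q=\lfloor M\rfloor\leq M<Q+1$), and the fact that reducing $a/q$ to lowest terms only improves the inequality since $q'\leq q$. Nothing is missing.
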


\begin{lemma}\label{lem-app-2}
   Let $N\geq 1$ and $k\geq 2$. Assume that the polynomial $\sum_{j=1}^k\alpha_j x^j$ satisfies
    \begin{equ*}
        \alpha_j=\frac{a_j}{q}+\beta_j \quad\text{and} \quad|\beta_j|\leq \frac{1}{2k^2q}\frac{N}{N^j}, \quad  1\leq j\leq k.
    \end{equ*}
Denote that $e_q(x)=e^{2\pi ix/q}$. Let $d=\gcd(q,a_2,\ldots,a_k)$. Then for any $\epsilon>0$ and any integer $1\leq T\leq N$, we have
    \begin{equ}\label{app-1}
        \sum_{n=1}^T e\Big(\sum_{j=1}^k \alpha_j n^j\Big)=q^{-1}\sum_{n=1}^qe_q\Big(\sum_{j=1}^k a_j n^j\Big)\int_0^T\!e\Big(\sum_{j=1}^k \beta_j x^j\Big)\,\mathrm{d}x+O(d^{\frac{1}{k}}q^{1-\frac{1}{k}+\frac{\epsilon}{2}}),
    \end{equ}
    where
    \begin{equ}\label{app-2}
        \int_0^T\!e\Big(\sum_{j=1}^k \beta_j x^j\Big)\,\mathrm{d}x\lesssim N\min\left\{1,N^{-\frac{1}{k}}|\beta_1|^{-\frac{1}{k}},\ldots,N^{-1}|\beta_k|^{-\frac{1}{k}}\right\},
    \end{equ}
    and
    \begin{equ}
        \label{app-103}
        \sum_{n=1}^qe_q\Big(\sum_{j=1}^k a_j n^j\Big)=O(q^{1-(1/k)+\epsilon}d^{1/k}).
    \end{equ}

\end{lemma}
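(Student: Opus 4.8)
The plan is to prove the three assertions \eqref{app-1}, \eqref{app-2} and \eqref{app-103} separately, treating \eqref{app-1} as the main statement and the other two as the inputs controlling, respectively, the oscillatory integral and the complete Gauss sum that appear in it. Throughout write $S:=\sum_{n=1}^q e_q\big(\sum_{j=1}^k a_j n^j\big)$ and $\phi(x):=\sum_{j=1}^k\beta_j x^j$, so that $g(x):=e(\phi(x))$ is the slowly varying factor. The role of the hypothesis $|\beta_j|\le\frac{1}{2k^2q}N^{1-j}$ is precisely to force $\sup_{0\le x\le N}|\phi'(x)|\lesssim 1/q$ (each term $j|\beta_j|x^{j-1}$ is $\le j/(2k^2q)$), i.e. the smooth factor oscillates no faster than the scale-$q$ arithmetic factor $e_q(\sum_j a_j n^j)$; this is what ultimately lets the arithmetic cancellation survive the perturbation.

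First I would establish \eqref{app-1}. Split $n=mq+h$ with $1\le h\le q$; since $\sum_j a_j(mq+h)^j\equiv\sum_j a_j h^j\pmod q$, the arithmetic factor depends only on $h$, so
\[
\sum_{n=1}^T e\Big(\sum_{j}\alpha_j n^j\Big)=\sum_{m\ge0}\ \sum_{\substack{1\le h\le q\\ mq+h\le T}}e_q\Big(\sum_j a_j h^j\Big)\,g(mq+h)+(\text{partial block}),
\]
the last block carrying $<q$ terms. Setting $F(y):=\sum_{1\le h\le y}e_q(\sum_j a_j h^j)$ and writing $F(y)=\tfrac{y}{q}S+R(y)$, an Abel summation within each complete block together with one integration by parts in the resulting integral of $F$ against $g'$ produces, after the blocks are reassembled, the clean identity
\[
\sum_{n=1}^T e\Big(\sum_j \alpha_j n^j\Big)=\frac{S}{q}\int_0^T g(x)\,\mathrm{d}x-\int_0^T \widetilde R(x)\,g'(x)\,\mathrm{d}x+O\big(\sup_{y\le q}|F(y)|\big),
\]
where $\widetilde R$ is the $q$-periodic extension of $R$. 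The first term is exactly the main term of \eqref{app-1}, and the partial-block term $O(\sup_{y\le q}|F(y)|)$ is $O(q^{1-\frac1k+\epsilon}d^{1/k})$ by the completion bound for incomplete Gauss sums, which I would deduce from \eqref{app-103}.

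The hard part is the residual $\int_0^T\widetilde R(x)g'(x)\,\mathrm{d}x$, and I expect it to be the main obstacle. The trivial bound $\sup|\widetilde R|\cdot\int_0^T|g'|$ only gives $O(Nq^{-1/k})$, which is of the same order as the main term and hence useless; one must exploit that $\widetilde R$ oscillates on scale $q$ while $g'$ does not. Here I would expand $\widetilde R$ by completion on scale $q$ into the twisted complete sums $S(\ell)=\sum_{h\bmod q}e_q(\sum_j a_jh^j+\ell h)$, each of which is again $O(q^{1-\frac1k+\epsilon}d^{1/k})$ since $\ell$ only perturbs the linear coefficient and leaves $d=\gcd(q,a_2,\dots,a_k)$ unchanged, the associated Fourier weights decaying like $1/|\ell|$. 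This reduces matters to the oscillatory integrals $\int_0^T e\big(\ell x/q+\phi(x)\big)\,\phi'(x)\,\mathrm{d}x$. Because $|\phi'|\lesssim 1/q$, for $|\ell|\ge 2$ the phase has derivative $\gtrsim|\ell|/q$ and repeated integration by parts yields strong decay, while the modes $\ell=0,\pm1$ are handled directly (their stationary-phase contribution is $O(q^{1/2})\le O(q^{1-1/k})$ for $k\ge2$); summing against the $1/|\ell|$ weights collapses the entire residual to $O(q^{1-\frac1k+\epsilon}d^{1/k})$. This completion-plus-nonstationary-phase step is exactly where the gain from the crude $Nq^{-1/k}$ down to $q^{1-\frac1k+\epsilon}d^{1/k}$ is realized.

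Finally, \eqref{app-2} is Vaughan's oscillatory-integral estimate (Theorem 7.3 of \cite{Vaughan}, already recorded as \eqref{I-bound}), proved by the van der Corput / $j$-th derivative test after the rescaling $x=Ny$, under which the coefficient of $y^j$ has size $|\beta_j|N^j$; and \eqref{app-103} is the classical complete exponential sum bound, obtained from the multiplicativity of $S_{k,q}$ under the Chinese Remainder Theorem, Weil's bound at prime moduli and Hua's estimate at prime-power moduli, the factor $d^{1/k}$ entering through the common divisor of $q$ with the higher coefficients (this is consistent with, and weaker than, Lemma \ref{refinement-Gauss-sum}). Assembling the main term of \eqref{app-1} with the size bounds \eqref{app-2} for the integral and \eqref{app-103} for $S$ then completes the proof.
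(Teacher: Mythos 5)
The paper never actually proves Lemma \ref{lem-app-2}: it is quoted from Baker \cite[Chapter 4]{Baker1986} (see also \cite{Baker1982,Baker2016}), so there is no internal proof to compare against. Your proposal reconstructs what is essentially the classical argument behind Baker's theorem, and its architecture is sound: the decomposition $F(y)=\tfrac{y}{q}S+R(y)$ with $R$ $q$-periodic and $\|R\|_{\infty}\lesssim q^{\epsilon}\max_{\ell}|S(\ell)|\lesssim d^{1/k}q^{1-1/k+\epsilon}$ by completion (and the observation that the twist $a_1\mapsto a_1+\ell$ leaves $d=\gcd(q,a_2,\ldots,a_k)$ untouched, which is the crucial uniformity); the correct diagnosis that the trivial bound $\|R\|_{\infty}\int_0^T|g'|\approx d^{1/k}Nq^{-1/k}$ is of the same order as the main term and hence useless; and the rescue of the residual by expanding $R$ into Fourier modes $\hat R_{\ell}=S(-\ell)/(2\pi i\ell)$ and playing the $1/|\ell|$ weight against oscillatory-integral decay. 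That last step does work: for $\ell\neq0$ the amplitude $\phi'$ has size $O(1/q)$ and total variation $O(k/q)$, while the phase derivative $\phi'+\ell/q$ has modulus at least $|\ell|/(2q)$, so a single integration by parts gives $\big|\int_0^T\phi'(x)e(\phi(x)+\ell x/q)\,\mathrm{d}x\big|\lesssim_{k}1/|\ell|$, and the series $\sum_{\ell\neq0}|\ell|^{-2}$ converges, yielding the claimed error $O(d^{1/k}q^{1-1/k+\epsilon/2})$.

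Two points should be repaired, though neither is fatal. First, your handling of the modes $\ell=0,\pm1$ by ``stationary phase, $O(q^{1/2})$'' is wrong on both counts: the hypothesis $|\beta_j|\le(2k^2q)^{-1}N^{1-j}$ forces $|\phi'|\le(k+1)/(4kq)<1/(2q)$, so the phase $\phi(x)+\ell x/q$ has \emph{no} stationary point for any $\ell\neq0$, and these modes are controlled by exactly the same non-stationary-phase bound as $|\ell|\ge2$; their true contribution is $O_{k}(d^{1/k}q^{1-1/k+\epsilon})$, not $O(q^{1/2})$ (which for $k\ge3$ would anyway be the wrong comparison), while $\ell=0$ is just the mean of $R$ times $\int_0^T\phi'e(\phi)\,\mathrm{d}x=O(1)$. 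Second, ``repeated integration by parts yields strong decay'' overstates the situation: the sharp cutoff at $x=0,T$ pins the decay of each integral at $O(1/|\ell|)$ through boundary terms, and it is only the product with the $1/|\ell|$ Fourier weight that is summable --- your closing sentence shows this is what you actually use, so state it that way. Finally, integrating the infinite Fourier series of $R$ term by term against $g'$ deserves a word of justification (bounded convergence for symmetric partial sums of a periodic function of bounded variation suffices), and the two quoted inputs \eqref{app-2} (Vaughan \cite{Vaughan}, Theorem 7.3) and \eqref{app-103} (Hua's bound with the gcd refinement, which is where the factor $d^{1/k}$ genuinely requires the CRT reduction and prime-power analysis you only sketch) are best cited rather than re-derived, exactly as the paper does.
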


\begin{conj}\label{weyl-sum-conj}
    For any fixed integer $k\geq 2$ and  number $\epsilon>0$, we have
    \begin{equ}\label{app-3}
        \sum_{n=1}^N e\Big(\sum_{j=1}^k\alpha_j n^j\Big)\lesssim_{k,\epsilon} N^{1+\epsilon}\left(\frac{1}{q}+\frac{q}{N^k}\right)^{1/k},
    \end{equ}
    if $|\alpha_k-a/q|\leq 1/q^2$ with $(a,q)=1$.
\end{conj}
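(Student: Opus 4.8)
The statement is the sharp form of Weyl's inequality, in which only the Diophantine approximation to the \emph{leading} coefficient $\alpha_k$ is allowed to influence the bound, with $\alpha_1,\dots,\alpha_{k-1}$ left unconstrained. Two classical strategies present themselves. The first is the circle method embodied in Lemma~\ref{lem-app-2}: when $q$ is small and the point lies on a major arc, so that all coefficients admit a simultaneous rational approximation $\alpha_j=a_j/q+\beta_j$ with the $\beta_j$ controlled as in that lemma, the sum splits as a complete exponential sum $\sum_{n=1}^q e_q(\sum_j a_j n^j)$ times the oscillatory integral \eqref{app-2}, up to an admissible error. Using the complete-sum bound \eqref{app-103} with $d=1$, namely $O(q^{1-1/k+\epsilon})$, together with the stationary-phase estimate \eqref{app-2} and optimising the free parameter $q$ (the two terms $q^{-1}$ and $qN^{-k}$ balancing near $q\sim N^{k/2}$), reproduces precisely the conjectured right-hand side. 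For $k=2$ this closes the problem, since the complete sum is then a classical Gauss sum of modulus exactly $\sqrt q$.

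For the complementary minor-arc range the only available tools are Weyl differencing and Vinogradov's method. Weyl differencing, applied $k-1$ times, reduces the degree-$k$ sum to a linear sum whose size is governed by a divisor/counting argument; after the $k-1$ differencing steps one is left with $1/2^{k-1}$ in place of the conjectured $1/k$. Vinogradov's method, via the now-proved mean value theorem of Bourgain–Demeter–Guth \cite{BDG} and Wooley \cite{Wooley2017}, instead counts solutions of the associated system of Diophantine equations and produces an exponent of order $1/(k(k-1))$. Both of these are strictly smaller than $1/k$ as soon as $k\ge3$, so neither route establishes \eqref{app-3}.

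The crux — and the reason the assertion is recorded as a conjecture rather than a lemma — is exactly this quantitative gap: extracting the full square-root-type cancellation encoded by the exponent $1/k$, uniformly over the unconstrained lower-order coefficients. I do not expect to prove \eqref{app-3} unconditionally with present technology; the honest deliverable of the circle-method plan above is the \emph{conditional} implication used in this appendix, in which \eqref{app-3} is assumed in order to take $r_k=k+1$ in \eqref{s_p,k} and so sharpen Theorem~\ref{refinement-max-estimate}. Removing the assumption would require a genuinely new minor-arc input surpassing both differencing and the Vinogradov bound.
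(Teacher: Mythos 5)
The statement you were asked to prove is recorded in the paper as a conjecture precisely because no proof is known: the paper never proves \eqref{app-3}, but only assumes it in Appendix~\ref{appendix-a} to conditionally sharpen Theorem~\ref{refinement-max-estimate}, and you correctly recognize this rather than claiming a proof. Your diagnosis also matches the paper's own framing --- major arcs are handled by the decomposition of Lemma~\ref{lem-app-2}, the case $k=2$ follows from classical Weyl/Gauss-sum bounds, and for $k\ge3$ the minor-arc exponents available from Weyl differencing ($1/2^{k-1}$) and the Vinogradov mean value theorem (of order $1/(k(k-1))$) fall strictly short of the conjectured $1/k$ --- so declining to assert an unconditional proof is exactly the right call.
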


\begin{prop}\label{lem-app-6}
    Assume that Conjecture \ref{weyl-sum-conj} holds. Let $N>N_0(k,\epsilon)$ and suppose that
    \begin{equ}\label{app-4}
        \Big|\sum_{n=1}^Ne(tn^k+xn)\Big|\geq H>N^{1-\frac{1}{k}+\epsilon}
    \end{equ}
    with $t,x\in [0,1)$.
    Then there are integers $q,r_1,r_2$ such that
    \begin{equ}\label{app-5}
        1\leq q\leq (NH^{-1})^kN^{\epsilon},\quad \gcd(q,r_1,r_2)=1,
    \end{equ}
    and
    \begin{align}
        \Big|t-\frac{r_1}{q}\Big|&\leq q^{-1}(NH^{-1})^kN^{-k+\epsilon},\label{app-6}\\
        \Big|x-\frac{r_2}{q}\Big|&\leq q^{-1}(NH^{-1})^kN^{-1+\epsilon}.\label{app-7}
    \end{align}
\end{prop}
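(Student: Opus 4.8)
The plan is to prove this as a standard circle-method structure theorem, with the conjectured Weyl bound \eqref{app-3} playing the role that the unconditional Weyl-differencing / Vinogradov estimates play in Proposition~\ref{stru-3}; its sole function is to manufacture a denominator $q$ small enough to match \eqref{app-5}, after which everything reduces to the usual major-arc extraction. First I would apply Dirichlet's theorem (Lemma~\ref{lem-app-1}) to the leading coefficient $t=\alpha_k$ with parameter $M=2k^2N^{k-1}$, obtaining $a/q$ with $\gcd(a,q)=1$, $1\le q\le M$ and $|t-a/q|\le 1/(qM)$. This choice of $M$ is dictated by Lemma~\ref{lem-app-2}: it makes $|\beta_k|=|t-a/q|\le \tfrac{1}{2k^2q}N^{1-k}$, which is exactly the top-degree smallness hypothesis of the decomposition lemma, while still giving $|t-a/q|\le 1/q^2$ so that Conjecture~\ref{weyl-sum-conj} applies to $t$.

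Next I would feed this into the conjecture to bound $q$. Since $\big|\sum_{n=1}^N e(tn^k+xn)\big|\ge H$, estimate \eqref{app-3} gives $H^k\lesssim N^{k+k\epsilon}(q^{-1}+qN^{-k})$. The bound $q\le 2k^2N^{k-1}$ forces $qN^{-k}\lesssim N^{-1}$, so if $q>N$ then $q^{-1}+qN^{-k}\lesssim N^{-1}$ and hence $H\lesssim N^{1-\frac1k+\epsilon}$, which contradicts the hypothesis $H>N^{1-\frac1k+\epsilon}$ once the $\epsilon$ in \eqref{app-3} is chosen smaller than the one in the hypothesis. Therefore $q\le N$, in which case $q^{-1}\ge qN^{-k}$ and $H^k\lesssim N^{k+k\epsilon}q^{-1}$, i.e. $q\lesssim (NH^{-1})^kN^{k\epsilon}$, which is \eqref{app-5} after renaming $\epsilon$. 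This is the step where the conjecture is indispensable: a single application of Dirichlet cannot simultaneously deliver an approximation fine enough for the decomposition and a denominator as small as $(NH^{-1})^k$, and it is precisely the conjectured cancellation that rules out the large-denominator alternative.

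With $q$ controlled, I would round $qx$ to the nearest integer $b$ and apply Lemma~\ref{lem-app-2} to $\sum_{n=1}^N e(\alpha_k n^k+\cdots+\alpha_1 n)$ with $\alpha_k=t$, $\alpha_1=x$ and $\alpha_j=0$ for $2\le j\le k-1$, so that all intermediate $\beta_j$ vanish. Since $\gcd(a,q)=1$ we have $d=\gcd(q,0,\dots,0,a)=1$, so by \eqref{app-103} the complete sum $\sum_{n=1}^q e_q(an^k+bn)$ obeys $|S|\lesssim q^{1-\frac1k+\epsilon}$ and the error term in \eqref{app-1} is $O(q^{1-\frac1k+\epsilon})$. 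Because $q\lesssim (NH^{-1})^kN^\epsilon$ and $H>N^{1-\frac1k+\epsilon}$, this error is $\lesssim N^{k-1}H^{-(k-1)}N^\epsilon\ll H$ (the last inequality being $H^k\gg N^{k-1}$); hence the main term dominates, $q^{-1}|S|\,|I|\gtrsim H$, giving $|I|\gtrsim Hq^{\frac1k-\epsilon}$ for the integral $I$ in \eqref{app-2}. Comparing with \eqref{app-2}, which yields $|I|\lesssim N^{1-\frac jk}|\beta_j|^{-\frac1k}$ for each $j$, forces $|\beta_j|\lesssim q^{-1}(NH^{-1})^kN^{-j+\epsilon}$. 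Taking $j=k$ recovers \eqref{app-6} with $r_1=a$ and $j=1$ recovers \eqref{app-7} with $r_2=b$, and the coprimality $\gcd(q,r_1,r_2)=1$ is immediate from $\gcd(a,q)=1$.

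The one subtle point, and the remaining obstacle, is the localization of the linear coefficient: rounding gives only $|x-b/q|\le 1/(2q)$, whereas Lemma~\ref{lem-app-2} requires $|\beta_1|\le \tfrac{1}{2k^2q}$, a gap of a factor $k^2$. I would resolve this using the largeness of $H$: if $x$ fails to lie within $\tfrac{1}{2k^2q}$ of $b/q$, then $|\beta_1|\gtrsim 1/(k^2q)$, so $|I|\lesssim N^{1-\frac1k}|\beta_1|^{-\frac1k}\lesssim N^{1-\frac1k}q^{\frac1k}$ and the whole sum is $\lesssim N^{1-\frac1k+\epsilon}$, again contradicting $H>N^{1-\frac1k+\epsilon}$. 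Thus the hypothesis automatically places us in the regime where the decomposition is valid. Apart from this localization the argument is a routine major-arc computation, and the only genuinely new ingredient over Proposition~\ref{stru-3} is the sharper denominator bound supplied by Conjecture~\ref{weyl-sum-conj}.
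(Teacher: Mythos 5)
Your overall architecture is the same as the paper's: Dirichlet on the leading coefficient $t$, Conjecture~\ref{weyl-sum-conj} to force $q\lesssim (NH^{-1})^kN^{\epsilon}$ (the paper runs this with Dirichlet parameter $M=H^kN^{-\epsilon}$ rather than your $2k^2N^{k-1}$, an immaterial difference), then Lemma~\ref{lem-app-2} plus the bounds \eqref{app-2}, \eqref{app-103} to localize $t$ and $x$. Steps one through three of your proposal are correct as written. However, the point you yourself flag as ``the one subtle point'' --- the localization of the linear coefficient --- is handled by an argument that is circular, and this is a genuine gap. You round $qx$ to the nearest integer $b$, getting only $|\beta_1|\le 1/(2q)$, and then claim that in the bad case $|\beta_1|>\tfrac{1}{2k^2q}$ one has $|I|\lesssim N^{1-\frac1k}|\beta_1|^{-\frac1k}$ and hence ``the whole sum is $\lesssim N^{1-\frac1k+\epsilon}$,'' a contradiction. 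But the inference from a bound on the integral $I$ to a bound on the exponential sum requires the decomposition \eqref{app-1}, and Lemma~\ref{lem-app-2} is only valid under the hypothesis $|\beta_j|\le\tfrac{1}{2k^2q}\tfrac{N}{N^j}$ for \emph{all} $j$, including $j=1$ --- which is exactly what fails in the case you are trying to rule out. Nor can you substitute the unconditional decomposition of Lemma~\ref{decom-S-d}: its error term $q(1+|\beta_1|N+|\beta_k|N^k)$ is of size up to $N/2$ when $|\beta_1|\approx 1/(2q)$, which swamps $H$ whenever $H\ll N$, so no contradiction can be extracted that way either.

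The paper closes this gap differently, and this is the one idea your proposal is missing: instead of rounding, apply Dirichlet (Lemma~\ref{lem-app-1}) a \emph{second} time, to the real number $q_0x$ with parameter $2k^2$, producing $1\le q_1\le 2k^2$ and $b$ with $\gcd(b,q_1)=1$ and $|q_0x-\tfrac{b}{q_1}|\le\tfrac{1}{2k^2q_1}$; then set $q=q_0q_1$, $r_1=aq_1$, $r_2=b$. Dividing by $q_0$ gives $|x-\tfrac{r_2}{q}|\le\tfrac{1}{2k^2q}$, exactly the smallness Lemma~\ref{lem-app-2} demands, while the modulus grows only by the bounded factor $q_1\le 2k^2$, so the bound \eqref{app-5} survives (and $\gcd(q,r_1,r_2)=1$ follows from $\gcd(q,r_1)=q_1$ and $\gcd(q_1,b)=1$). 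With this substitution the remainder of your computation --- error term $O(q^{1-\frac1k+\epsilon}q_1^{\frac1k})\le H/2$, hence $|I|\gtrsim Hq^{\frac1k-\epsilon}$, hence $|\beta_j|\lesssim q^{-1}(NH^{-1})^kN^{-j+\epsilon}$ for $j=1,k$ --- goes through verbatim and is exactly the paper's proof.
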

\begin{proof}[Proof of Proposition \ref{lem-app-6}]
    By Dirichlet's theorem (Lemma \ref{lem-app-1}), there exist $0\leq a\leq q_0\leq M=(NH^{-1})^{-k}N^{k-\epsilon}$ with $\gcd(a,q_0)=1$ such that
    \begin{equ}\label{app-8}
         \Big|t-\frac{a}{q_0}\Big|\leq \frac{1}{q_0M}\leq \frac{1}{q_0^2}.
    \end{equ}
    Then Conjecture \ref{weyl-sum-conj}  yields that
    \begin{equ}\label{app-9}
      \Big|\sum_{n=1}^Ne(tn^k+xn)\Big|\lesssim_{k,\epsilon} N^{1+\epsilon/2k}\Big(\frac{1}{q_0}+\frac{q_0}{N^k}\Big)^{1/k}.
    \end{equ}
    Comparing with \eqref{app-4}, we have
    \begin{equ}\label{app-10}
        (NH^{-1})^{-k}N^{-\epsilon/2}\lesssim \frac{1}{q_0}+\frac{q_0}{N^k}.
    \end{equ}
    Note that the term
    \begin{equ}
        \frac{q_0}{N^k}\leq\frac{M}{N^k}=(NH^{-1})^{-k}N^{-\epsilon}
    \end{equ}
    is much smaller than the left-hand side of the inequality \eqref{app-10} above. Hence we have
    \begin{equ}
        q_0\leq (NH^{-1})^kN^{\epsilon/2}.
    \end{equ}
   Plugging $H>N^{1-\frac{1}{k}+\epsilon}$ and $M=(NH^{-1})^{-k}N^{k-\epsilon}=H^{k}N^{-\epsilon}$ into \eqref{app-8}, we have
    \begin{equ}\label{app-13}
        \Big|t-\frac{a}{q_0}\Big|\leq q_0^{-1}H^{-k}N^{\epsilon}<\frac{1}{N^{(k-1)\epsilon}q_0}\frac{N}{N^k}.
    \end{equ}

    In order to apply Lemma \ref{lem-app-2}, we also need to give a rational approximation to $x$. By Lemma \ref{lem-app-1} again, there exist integers $1\leq q_1\leq 2k^2$ and $b$ with $\gcd(b,q_{\textcolor{red}{1}})=1$ such that
    \begin{equ}\label{app-14}
        \Big|q_0x-\frac{b}{q_1}\Big|\leq \frac{1}{2k^{2}q_1}.
    \end{equ}
    Let $q=q_0q_1$, $r_1=aq_1$ and $r_2=b$. Then \eqref{app-13} and \eqref{app-14} can be rewritten into
    \begin{equ}
        \Big|t-\frac{r_1}{q}\Big|<\frac{1}{2k^2q}\frac{N}{N^k},
    \end{equ}
    and
    \begin{equ}
         \Big|x-\frac{r_2}{q}\Big|<\frac{1}{2k^2q}\frac{N}{N^1}.
    \end{equ}
    Note that $\gcd(q,r_1)=q_1\cdot\gcd(q_0,a)=q_1$. By Lemma \ref{lem-app-2}, we have
    \begin{align*}
        H\leq \!\sum_{n=1}^N\!e(tn^k+xn)=\frac{1}{q}&\sum_{n=1}^{q}\!e_{q}(r_1n^k+r_2n)\!\int_0^N\!\!e(\beta_1 x^k+\beta_2 x)\,\mathrm{d}x\\
        &+O(q^{1-\frac{1}{k}+\epsilon}q_1^{\frac{1}{k}}),
    \end{align*}
    where $\beta_1=t-r_1/q$ and $\beta_2=x-r_2/q$.
    If $N$ is sufficiently large, the error term
    \begin{equ}
        O(q^{1-\frac{1}{k}+\epsilon}q_1^{\frac{1}{k}})\leq \frac{1}{2}H,
    \end{equ}
    which implies
    \begin{align}
        \frac{1}{2}H\leq \frac{1}{q}\sum_{n=1}^{q}\!e_{q}(r_1 n^k+r_2 n)\!\int_0^N\!\!e(\beta_1 x^k+\beta_2 x)\,\mathrm{d}x.
    \end{align}
Note that \eqref{app-2} and \eqref{app-103} yields that
\begin{equ}
    \int_0^N\!\!e(\beta_1 x^k+\beta_2 x)\,\mathrm{d}x\lesssim N\min\Big\{1,N^{-1}|\beta_1|^{-1/k},N^{-1/k}|\beta_2|^{-1/k}\Big\},
\end{equ}
and
\begin{equ}
    \frac{1}{q}\sum_{n=1}^{q}\!e_{q}(r_1 n^k+r_2 n)=O(q_1^{1/k}q^{-{1/k}+\epsilon})=O_{k}(q^{-1/k}N^{\epsilon}).
\end{equ}
 Then we have
 \begin{equ}
     \max\{q, N|q\beta_2|,N^k|q\beta_1|\}\lesssim (NH^{-1})^k N^{k\epsilon}.
 \end{equ}
\end{proof}
This lemma is an analogy of Proposition \ref{stru-3}. Similarly, by introducing the definitions of power-free and power-full numbers, we can also get a refined rational approximation  and a super-level-set estimate  corresponding to Proposition \ref{refinement-structure} and Lemma \ref{refinement-level-set} respectively with $D= k$ instead of $\min\{2^{k-1},k(k-1)\}$. That is, for any $A>N^{1-\frac{1}{k}+\epsilon}$, we have the level set estimate
\begin{equ}
 \Big|\Big\{x\in\mathbb{T}:\sup_{0<t<1}\big|\sum_{n=1}^N\!e(tn^k+xn)\big|  >A     \Big\}\Big|\lesssim N^{k+\epsilon}A^{-(k-1)}.
\end{equ}
Then by Lemma \ref{level-set-F} with $a=k+\epsilon$, $b=k+1$ and $M>N^{1-\frac{1}{k}+\epsilon}$ to be determined later, we obtain
 \begin{equ}
  \bigg\|\sup_{0<t<1}\Big|\sum_{n=1}^{N}e^{2\pi i(nx + n^{k}t)}\Big|\bigg\|_{L^{p}(\T)} \lesssim N^{\epsilon}(M+N^{\frac{k}{p}}M^{1-\frac{k+1}{p}}+N^{1-\frac{1}{p}}).
  \end{equ}
  Take $M=N^{1-\frac{1}{k+1}}$, we get the optimal upper bound
   \begin{equ}
  \bigg\|\sup_{0<t<1}\Big|\sum_{n=1}^{N}e^{2\pi i(nx + n^{k}t)}\Big|\bigg\|_{L^{p}(\T)} \lesssim N^{\frac{1}{2}+s_{p,k}+\epsilon}.
  \end{equ}
where for $k\ge3$
\begin{equation*}
  s_{p,k} =
 \begin{cases}
 \frac{1}{2}-\frac{1}{k+1}, \quad &\textup{if}\,\,\,1\le p\le k+1,\\
 \frac{1}{2}-\frac{1}{p}, \quad &\textup{if}\,\,\,p>k+1.
 \end{cases}
\end{equation*}

\end{document}